\newtheorem{thm}{Theorem}[section]
\newtheorem{lem}[thm]{Lemma}
\newtheorem{cor}[thm]{Corollary}
\newtheorem{ques}[thm]{Question}
\theoremstyle{definition}
\newtheorem{defn}[thm]{Definition}
\theoremstyle{remark}
\newtheorem{remk}[thm]{Remark}
\newtheorem{remks}[thm]{Remarks}
\newtheorem{exm}[thm]{Example}
\newtheorem{exms}[thm]{Examples}
\newtheorem{notat}[thm]{Notation}
\numberwithin{equation}{section}
\newcommand{\thmref}{Theorem~\ref}
\newcommand{\lemref}{Lemma~\ref}
\newcommand{\sC}{{\mathcal C}}
\newcommand{\sF}{{\mathcal F}}
\newcommand{\sH}{{\mathcal H}}
\newcommand{\sK}{{\mathcal K}}
\newcommand{\sL}{{\mathcal L}}
\newcommand{\sM}{{\mathcal M}}
\newcommand{\sO}{{\mathcal O}}
\newcommand{\sR}{{\mathcal R}}
\newcommand{\sZ}{{\mathcal Z}}
\newcommand{\A}{{\mathbb A}}
\newcommand{\G}{{\mathbb G}}
\renewcommand{\H}{{\mathbb H}}
\renewcommand{\P}{{\mathbb P}}
\newcommand{\Q}{{\mathbb Q}}
\newcommand{\Z}{{\mathbb Z}}
\newcommand{\fm}{{\mathfrak m}}
\newcommand{\fp}{{\mathfrak p}}
\newcommand{\CH}{{\rm CH}}
\newcommand{\surj}{\twoheadrightarrow}
\newcommand{\inj}{\hookrightarrow}
\newcommand{\Pic}{{\rm Pic}}
\newcommand{\Spec}{{\rm Spec \,}}
\newcommand{\Sch}{{\operatorname{\mathbf{Sch}}}}
\newcommand{\Sm}{{\mathbf{Sm}}}
\newcommand{\ds}{{/\kern-3pt/}}
\newcommand{\ov}{\overline}
\newcommand{\tuborg}{\left\{\begin{array}{ll}}
\newcommand{\sluttuborg}{\end{array}\right.}
\newcommand{\wt}{\widetilde}
\begin{document}
\title{On 0-cycles with modulus}
\author{Amalendu Krishna}
\address{School of Mathematics, Tata Institute of Fundamental Research,  
1 Homi Bhabha Road, Colaba, Mumbai, India}
\email{amal@math.tifr.res.in}

\keywords{algebraic cycles, modulus condition, K-theory}

\subjclass[2010]{Primary 14C25; Secondary 14F30, 14G40}

\maketitle


\begin{abstract}
Given a non-singular surface $X$ over a field and an effective Cartier divisor
$D$, we provide an exact sequence connecting $\CH_0(X,D)$ and the
relative $K$-group $K_0(X,D)$. We use this exact sequence to answer a
question of Kerz and Saito whenever $X$ is a resolution of singularities
of a normal surface. This exact sequence and two vanishing theorems are used  
to show that the localization sequence for ordinary Chow groups does not 
extend to Chow groups with modulus. This in turn shows that the additive
Chow groups of 0-cycles on smooth projective schemes can not always be 
represented as reciprocity functors.
\end{abstract}

\section{Introduction}\label{sect:Intro}
The idea of algebraic cycles with modulus was first conceived by
Bloch and Esnault (see \cite{BE-1} and \cite{BE-2}). One main motivation
behind such a theory is to develop a theory of motivic cohomology which
can describe the relative $K$-theory of smooth schemes relative to 
closed subschemes. 
A potential candidate for such a theory was later
constructed and studied by Park \cite{P1}, Krishna-Levine \cite{KL} and
more recently by
Kerz-Saito \cite{KeS} and Binda-Saito \cite{BS}. It was conjectured in
\cite{KL} that there should exist a spectral sequence consisting of these
motivic cohomology groups whose abutment is the relative $K$-theory.

The results of this text were partly motivated by the following question of 
Kerz and Saito (see \cite[Question~V]{KeS}). Let $X$ be a smooth 
quasi-projective scheme of dimension $d$ over a field $k$ and let 
$D \inj X$ be an effective Cartier divisor. Let $\CH_0(X,D)$ denote the Chow 
group 0-cycles on $X$ with modulus $D$. Let $\sK^M_{d, (X,D)}$ denote the 
relative Milnor $K$-theory sheaf on $X$. Let $U$ be an open subscheme of $X$ 
whose complement is a divisor.

\begin{ques}\label{ques:KES-1}
Assume that $X$ is projective and $k$ is a perfect field of positive
characteristic. Is there an isomorphism
\[
{\underset{D}\varprojlim} \ \CH_0(X,D) \xrightarrow{\simeq}
{\underset{D}\varprojlim} \ H^d_{nis}(X, \sK^M_{d, (X,D)}),
\]
where the limits are taken over all effective divisors on $X$ with
support outside $U$?
\end{ques}

It follows from the main results of \cite{KS}, \cite{KeS} and \cite{RS} that 
this question has a positive solution if $k$ is finite and 
the support of $X \setminus U$ is a normal crossing divisor. As explained in
\cite{KeS}, the above question is part of the bigger question of whether
the Chow groups with modulus satisfy Nisnevich or Zariski descent.
As we shall see shortly, the above question is also directly related to
the conjectured connection between the cycles with modulus and the relative 
$K$-theory.  

\subsection{Main results}\label{sect:MR}
Let $\Pic(X,D)$ denote the isomorphism classes of pairs $(\sL, \phi)$, where 
$\sL$ is a line bundle on $X$ and $\phi$ is an isomorphism 
$\phi: \sL|_D \xrightarrow{\simeq} \sO_D$. We prove the following result
as a partial answer answer to the above question.

\begin{thm}\label{thm:Surj-FF}
Let $k$ be any field and let $X$ be a non-singular quasi-projective surface 
over $k$ with an effective Cartier divisor $D$. 
Then there is an exact sequence
\begin{equation}\label{eqn:Surj-FF-0}
\CH_0(X,D) \xrightarrow{cyc_{(X,D)}} K_0(X,D) \to \Pic(X,D) \to 0.
\end{equation}
In particular, $cyc_{(X,D)}$ induces a surjective map
$\CH_0(X,D) \surj H^2_{nis}(X, \sK^M_{2, (X,D)})$.
\end{thm}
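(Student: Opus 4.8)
The plan is to exhibit the asserted sequence as the bottom two layers of the coniveau (equivalently $\gamma$-) filtration $F^\bullet$ on the relative group $K_0(X,D)$, in direct analogy with the classical description of $K_0$ of a smooth surface. For an ordinary smooth surface $X$ the filtration $F^\bullet K_0(X)$ has associated graded $\Z$, $\Pic(X)$ and $\CH_0(X)$, the last identification $F^2 K_0(X) \xrightarrow{\simeq} \CH_0(X)$ being an integral isomorphism because the cycle class map has no denominators on the top layer of a surface. The relative theorem should mirror this: the rank layer vanishes (a class in $K_0(X,D)$ is trivialized along $D$, hence has rank $0$ once $D$ meets every component of $X$), the first relative Chern class recovers $\Pic(X,D)$, and the weight-two layer is the image of the cycle class map. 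Thus the theorem reduces to two points: (i) a surjective relative determinant $\det \colon K_0(X,D) \surj \Pic(X,D)$, and (ii) the identification $\ker(\det) = \im(cyc_{(X,D)})$.

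For (i), the determinant of a perfect complex trivialized along $D$ gives a natural homomorphism $\det\colon K_0(X,D) \to \Pic(X,D)$, and a relative line bundle $(\sL,\phi)$ produces the class $[\sL,\phi]-[\sO_X]$, which is a section of $\det$ up to sign; hence $\det$ is surjective, giving exactness at $\Pic(X,D)$. The easy half of (ii), namely $\im(cyc_{(X,D)}) \subseteq \ker(\det)$, is immediate: a $0$-cycle is represented by sheaves supported in codimension two, whose first Chern class is zero.

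The substance is the reverse inclusion $\ker(\det) \subseteq \im(cyc_{(X,D)})$: every relative class of weight two must be shown to come from a $0$-cycle with modulus. I would first prove a presentation/moving lemma in the relative setting, reducing a class of $F^2 K_0(X,D) = \ker(\det)$ to classes supported on closed points of $U = X \setminus D$; I would then compare, through the diagram relating $(X,D)$, $X$ and $U$, with the absolute isomorphism $F^2 K_0(X) \simeq \CH_0(X)$, controlling the contribution of curves meeting $D$. \textbf{The crux is to match the two notions of ``relative to $D$'': the $K$-theoretic triviality along $D$ must be translated into the geometric modulus condition $f \equiv 1 \pmod{D}$ that defines rational equivalence in $\CH_0(X,D)$.} This compatibility --- the precise point at which the $K$-theoretic and cycle-theoretic relative structures are forced to agree --- is where I expect the main difficulty to lie.

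Finally, the displayed surjection onto $H^2_{nis}(X,\sK^M_{2,(X,D)})$ follows by combining the exact sequence with a relative Bloch formula. The map $cyc_{(X,D)}$ factors as
\[
\CH_0(X,D) \xrightarrow{\ \rho\ } H^2_{nis}(X,\sK^M_{2,(X,D)}) \xrightarrow{\ \lambda\ } K_0(X,D),
\]
where $\rho$ is the natural cycle class into Milnor $K$-cohomology and $\lambda$ is induced by $\sK^M_2 \to \sK_2$. Using the Gersten resolution on the smooth surface $X$ together with Matsumoto's identification of $\sK^M_2$ with $\sK_2$, one identifies $\lambda$ with an injection whose image is exactly $F^2 K_0(X,D) = \ker(\det)$. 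Since the exact sequence gives $\im(cyc_{(X,D)}) = \ker(\det) = \im(\lambda)$ and $\lambda$ is injective, $\rho$ must be surjective, which is the assertion.
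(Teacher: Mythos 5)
Your proposal correctly isolates the crux --- translating $K$-theoretic triviality along $D$ into the geometric modulus condition --- but it stops exactly where the proof must begin: the inclusion $\ker(\det) \subseteq \im(cyc_{(X,D)})$ is made to rest on an unstated ``presentation/moving lemma in the relative setting'' that you do not supply, and no elementary relative moving lemma of this kind is available. The paper does not prove this step by moving cycles either; it imports it. It first degenerates the Thomason--Trobaugh spectral sequence of the pair to obtain $0 \to H^2(X,\sK_{2,(X,D)}) \to K_0(X,D) \to \Pic(X,D) \to 0$ (\lemref{lem:K0-mod}); the nontrivial point there is the vanishing of $d_2^{0,1}$, proved by lifting a unit $f \equiv 1 \bmod D$ to $f^*([t]) \in K_1(X,D)$ with $[t] \in K_1(\G_m,\{1\})$ --- your coniveau-filtration picture tacitly assumes this identification of $\ker(\det)$ with $H^2(X,\sK_{2,(X,D)})$. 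It then gets surjectivity of $\CH_0(X,D) \to H^2(X,\sK_{2,(X,D)})$ from Kato--Saito \cite[Theorem~2.5]{KS} on $H^2_{nis}(X,\sK^M_{2,(X,D)})$, combined with the isomorphism $\sK^M_{2,(X,D)} \simeq \wt{\sK}_{2,(X,D)}$. So the logical order is the reverse of your final paragraph: the Milnor-$K$ surjectivity is an \emph{input} to the exact sequence, not a corollary of it, and without invoking such a class-field-theoretic result your plan has no mechanism to produce the hard inclusion. Separately, you never verify that $cyc_{(X,D)}$ is defined on $\CH_0(X,D)$ at all, i.e., that it kills relative rational equivalences; your ``easy half'' only checks that $0$-cycles land in $\ker(\det)$. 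In the paper this descent occupies most of Section~3: the curve case (\lemref{lem:Bloch-curve}) plus a pushforward-compatibility statement for finite maps from nonsingular curves (\lemref{lem:Cycle-class-0}), proved by a sheaf-level cofiber argument using Tor-independence of $C$ and $D$ over $X$, together with \cite[Proposition~2.10]{KP-1}.

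One subsidiary step would also fail as stated: ``Gersten resolution plus Matsumoto'' does not identify $\lambda$. The Gersten resolution exists for the absolute sheaves $\sK_{2,X}$ on the smooth $X$, not for the relative sheaves $\sK_{2,(X,D)}$ or $\sK^M_{2,(X,D)}$, and Matsumoto's theorem compares absolute $K_2$ with $K_2^M$ for fields, whereas what is needed is a comparison of \emph{relative} Milnor and Quillen $K_2$ of local rings with respect to the ideal of $D$. The paper instead routes through the intermediate sheaf $\wt{\sK}_{2,(X,D)} = \ker(\sK_{2,X} \to \sK_{2,D})$: the natural map $\sK_{2,(X,D)} \to \wt{\sK}_{2,(X,D)}$ is surjective with kernel supported on the one-dimensional $D$, hence induces an isomorphism on $H^2$ of the surface, and $\sK^M_{2,(X,D)} \simeq \wt{\sK}_{2,(X,D)}$. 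In sum, your skeleton (exactness at $\Pic(X,D)$ via a relative determinant; kernel equal to the image of the cycle class map; comparison with Milnor $K$-cohomology) matches the paper's, but each load-bearing step --- the spectral-sequence degeneration, the descent of $cyc_{(X,D)}$ through rational equivalence, and above all the surjectivity, which requires Kato--Saito --- is either missing or asserted in a form that does not hold.
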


\begin{remk}\label{remk:Surj-FF-aff}
The map $cyc_{(X,D)}$ turns out to be injective as well if $X$ is affine. 
A proof of this using completely different type of argument 
will appear in \cite{BiK}.
\end{remk}

Let us now assume that $X$ is a resolution of singularities of a normal
surface $Y$ and let $U$ denote the regular locus of $Y$. Then we can
use \thmref{thm:Surj-FF} to obtain the following finer result which
fully answers Question~\ref{ques:KES-1} for a special class of
surfaces. 

\begin{thm}\label{thm:Surj-FF-Res}
Let $k$ be any field and let $X$ be a resolution of singularities of a normal
surface $Y$. Let $U$ denote the regular locus of $Y$. 
Then the cycle class map $\CH_0(X,D) \to H^2_{nis}(X, \sK^M_{2, (X,D)})$
induces an isomorphism
\[
{\underset{D}\varprojlim} \ \CH_0(X,D) \xrightarrow{\simeq}
{\underset{D}\varprojlim} \ H^2_{nis}(X, \sK^M_{2, (X,D)}),
\]
where the limits are taken over all effective divisors on $X$ with
support outside $U$.
\end{thm}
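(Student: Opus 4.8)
The plan is to derive the theorem from \thmref{thm:Surj-FF} by passing to the inverse limit and controlling the terms that measure the non-injectivity of the cycle class map. Write $f \colon X \to Y$ for the resolution and $E = f^{-1}(Y \setminus U)_{\red}$ for its exceptional locus, so that the effective divisors $D$ with support outside $U$ are exactly those supported on $E$; these are cofinal with the family $\{nE\}_{n \ge 1}$, so all the inverse limits below are sequential and the $\varprojlim^1$-formalism together with the Mittag--Leffler criterion applies. For each such $D$, \thmref{thm:Surj-FF} gives a surjection $cyc_{(X,D)} \colon \CH_0(X,D) \surj H^2_{nis}(X, \sK^M_{2,(X,D)})$ whose target is identified, by exactness of \eqref{eqn:Surj-FF-0}, with $\ker\!\big(K_0(X,D) \to \Pic(X,D)\big)$, i.e. with the image of $cyc_{(X,D)}$ in $K_0(X,D)$. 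Setting $N_D := \ker(cyc_{(X,D)}) = \ker\!\big(\CH_0(X,D) \to K_0(X,D)\big)$, I obtain for every $D$ a short exact sequence of inverse systems
\[
0 \to N_D \to \CH_0(X,D) \xrightarrow{cyc_{(X,D)}} H^2_{nis}(X, \sK^M_{2,(X,D)}) \to 0 .
\]

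Applying ${\underset{D}\varprojlim}$ produces the six-term exact sequence
\[
0 \to {\underset{D}\varprojlim}\, N_D \to {\underset{D}\varprojlim}\, \CH_0(X,D) \to {\underset{D}\varprojlim}\, H^2_{nis}(X, \sK^M_{2,(X,D)}) \xrightarrow{\partial} {\underset{D}{\varprojlim}}^{1} N_D \to \cdots ,
\]
so the theorem follows once I establish the two vanishing statements ${\underset{D}\varprojlim}\, N_D = 0$, which gives injectivity of the limit map, and ${\underset{D}{\varprojlim}}^{1} N_D = 0$, which forces $\partial = 0$ and hence surjectivity in the limit. To attack these I would first identify $N_D$ geometrically. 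Comparing the relative cycle class map for $(X,D)$ with the absolute one on $X$ through the localization and relative sequences relating $K_0(X,D)$, $K_0(X)$ and the $K$-theory of the thickened divisor $D$, I expect $N_D$ to be governed by the relative units and line-bundle data concentrated along $D$, that is, by the cohomology of $\sO_X$-twists supported on the infinitesimal thickenings of $E$, with the transition map $N_{D'} \to N_D$ for $D \le D'$ induced by the inclusion of ideal sheaves $\sO_X(-D') \hookrightarrow \sO_X(-D)$.

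The main obstacle is precisely the pro-vanishing of the system $\{N_D\}$, and this is where the geometry of the resolution must enter: since injectivity of $cyc_{(X,D)}$ for a fixed $D$ is not available in this non-affine setting, one cannot argue levelwise. The key point I would exploit is that the components of $E$, being the exceptional curves of a contraction to the normal surface $Y$, carry a negative-definite intersection matrix. Via the theorem on formal functions this negativity should make the relevant cohomology groups of the twists $\sO_X(-nE)$ restricted to the thickenings of $E$ pro-zero as $n \to \infty$, so that the transition maps of $\{N_D\}$ are eventually zero; the Mittag--Leffler criterion then yields both ${\underset{D}\varprojlim}\, N_D = 0$ and ${\underset{D}{\varprojlim}}^{1} N_D = 0$. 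Making this precise --- through a Grauert--Riemenschneider / Kawamata--Viehweg-type vanishing for the twists that actually control $N_D$, together with a careful matching in the limit of the relative Milnor $K$-cohomology $H^2_{nis}(X,\sK^M_{2,(X,D)})$ with the image of $cyc_{(X,D)}$ in $K_0(X,D)$ --- is the delicate part of the argument and the step where I expect to invoke the two vanishing theorems of the paper.
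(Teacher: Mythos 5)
Your reduction of the theorem to the two vanishings ${\varprojlim}\, N_D = 0$ and ${\varprojlim}^1 N_D = 0$ is formally correct (the divisors supported on the exceptional locus are indeed cofinal with $\{mE\}_{m\ge 1}$, so the systems are sequential and Mittag--Leffler applies), but the proof has a genuine gap at precisely the step you defer: you never obtain any handle on $N_D = \ker\bigl(\CH_0(X,D)\to K_0(X,D)\bigr)$. This kernel is a cycle-theoretic object --- the failure of injectivity of $cyc_{(X,D)}$ at a fixed level --- and no sheaf-theoretic or coherent-cohomological description of it is available; indeed levelwise injectivity is hard even in the affine case, where it requires a ``completely different type of argument'' (Remark~\ref{remk:Surj-FF-aff}, \cite{BiK}). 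Your proposed mechanism (negative definiteness of the exceptional intersection form, formal functions, Grauert--Riemenschneider-type vanishing) operates on coherent cohomology of twists $\sO_X(-mE)$ along thickenings of $E$, but you supply no bridge from $N_D$ to such data, and the expectation is doubtful on its face: for a non-rational singularity one has $R^1\pi_*\sO_X \neq 0$, so the cohomology of the thickenings $mE$ is \emph{not} pro-zero, and there is no reason the transition maps of $\{N_{mE}\}$ should be eventually zero by this kind of argument. You also misidentify the external inputs: the paper's two vanishing theorems (Theorems~\ref{thm:Affine-additive} and~\ref{thm:Affine-gen}) concern affine schemes and the additive modulus and are used only for \thmref{thm:Local-fail}; they play no role in the present theorem.

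The idea actually missing from your proposal is the descent to the singular surface $Y$. The paper proves the much stronger statement that $cyc_{(X,mE)}\colon \CH_0(X,mE)\to H^2(X,\sK_{2,(X,mE)})$ is an isomorphism \emph{levelwise} for all $m\gg 0$ (i.e., $N_{mE}=0$ eventually), so that no ${\varprojlim}^1$ analysis is needed. This is done by a diagram chase with three inputs: (i) a surjective pull-back $\pi^*\colon \CH_0(Y)\surj \CH_0(X,mE)$ from the Levine--Weibel Chow group of $Y$, using that rational equivalences on $Y$ avoid the singular locus $S$ and hence lift to admissible $1$-cycles on $(X,mE)$; (ii) the Bloch-type formula for normal surfaces from \cite{Kr-1}, giving $cyc_{(Y,mS)}\colon \CH_0(Y)\xrightarrow{\simeq} H^2(Y,\sK_{2,(Y,mS)})\simeq H^2(Y,\sK_{2,Y})$; and (iii) the theorem of Krishna--Srinivas \cite{KSri} that $H^2(Y,\sK_{2,(Y,mS)})\to H^2(X,\sK_{2,(X,mE)})$ is an isomorphism for $m\gg 0$ --- this last result is the deep content (its proof is where formal functions and vanishing on the resolution genuinely enter), and it is a substantial theorem, not something recoverable by the short coherent-cohomology argument you sketch. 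Combined with \thmref{thm:Surj-FF} and \lemref{lem:K0-mod}, the surjection in (i) is sandwiched between isomorphisms and is therefore itself an isomorphism. In short: your framework on $X$ alone has no access to $N_D$, whereas the actual proof gains access by comparing with $\CH_0(Y)$ downstairs.
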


\subsection{Localization sequence for Chow groups with modulus}
\label{sect:LCM}
Since the introduction of the Chow groups with modulus, various
authors have been trying to prove several properties of these
Chow groups which are analogous to the well known properties
of Bloch's higher Chow groups.
It was shown in \cite{KP-1} recently that the Chow groups with
modulus satisfy projective bundle and blow-up formulas.
It was however not known if the localization sequence for Bloch's higher
Chow groups is true for Chow groups with modulus.
We use \thmref{thm:Surj-FF} to show that the Chow groups with modulus
do not admit such a localization sequence. 
In fact, we show that even the localization sequence for the 
ordinary Chow groups (in the sense of \cite{Fulton}) does
not admit extension to Chow groups with modulus.
Answering this question was another motivation of this note.

Let $m \ge 2$ be any integer and let $D$ denote the Cartier divisor 
$\Spec({k[t]}/{(t^m)})$ inside $\Spec(k[t])$. For any
$Y \in \Sch/k$, let us denote the Cartier divisor $Y \times D \inj
Y \times \A^1_k$ by $D$ itself.

\begin{thm}\label{thm:Local-fail}
Let $k$ be an algebraically closed field of characteristic zero with
infinite transcendence degree over $\Q$.
Let $Y$ be a connected projective curve over $k$ of positive genus. Then for 
any inclusion $i: \{P\} \inj Y$ of a closed point, the sequence
\[
\CH_0(\{P\} \times \A^1_k, D) \xrightarrow{i_*} \CH_0(Y \times \A^1_k, D) 
\xrightarrow{j^*}
\CH_0(Y \setminus \{P\} \times \A^1_k, D) \to 0
\]
is not exact. 

In particular, the localization sequence for Bloch's higher Chow groups
does not extend to the Chow groups with modulus,
even for a closed pair of smooth schemes.
\end{thm}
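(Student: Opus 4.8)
The plan is to assume that the displayed sequence is exact and to derive a contradiction by comparing the three groups through \thmref{thm:Surj-FF}. Write $X_1=\{P\}\times\A^1_k$, $X_2=Y\times\A^1_k$ and $X_3=(Y\setminus\{P\})\times\A^1_k$, each equipped with the induced modulus $D$. First I would record the closed term: $X_1\cong\A^1_k$ is a smooth affine curve, so $\CH_0(X_1,D)$ is the relative Picard group $\Pic(\A^1_k,\,m\{0\})\cong(k[t]/(t^m))^*/k^*$, which is a \emph{finite-dimensional} $k$-vector space of dimension $m-1$. For the two surfaces $X_2,X_3$, \thmref{thm:Surj-FF} produces surjections from $\CH_0(X_i,D)$ onto the codimension-two part $\ker\!\big(K_0(X_i,D)\to\Pic(X_i,D)\big)$ of the relative $K$-group, and by \remref{remk:Surj-FF-aff} this map is an isomorphism for the affine $X_3$.

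Next I would establish a vanishing theorem for the affine open surface $X_3$: the codimension-two part $\ker\!\big(K_0(X_3,D)\to\Pic(X_3,D)\big)$ vanishes, hence $\CH_0(X_3,D)=0$. Here I expect to use that $U=Y\setminus\{P\}$ is affine, so that coherent cohomology in positive degrees vanishes, together with a moving argument compatible with the modulus. Granting this, $j^*$ is automatically surjective and $\ker j^*=\CH_0(X_2,D)$, so exactness would force $i_*\colon\CH_0(X_1,D)\to\CH_0(X_2,D)$ to be \emph{surjective}.

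The heart of the proof is to contradict this surjectivity by showing that $\CH_0(X_2,D)=\CH_0(Y\times\A^1_k,D)$ is too large to be reached from $\CH_0(X_1,D)$ by $i_*$. Using \thmref{thm:Surj-FF} to pass to $\ker\!\big(K_0(X_2,D)\to\Pic(X_2,D)\big)$, I would construct a regulator into a $k$-vector space of absolute differentials containing $H^0(Y,\Omega^1_{Y/k})\otimes_k\Omega^1_{k/\Q}$ --- the transcendental part of the two-forms with modulus on the surface $Y\times\A^1_k$, in the spirit of Bloch--Esnault and R\"ulling. Positive genus gives $H^0(Y,\Omega^1_{Y/k})\neq 0$ and infinite transcendence degree gives $\dim_k\Omega^1_{k/\Q}=\infty$, so this target is infinite-dimensional over $k$ and the regulator surjects onto it. On the other hand, every class in the image of $i_*$ is represented by a $0$-cycle supported over the single point $P\in Y$, so its regulator lies in the subspace of forms ``supported at $P$''; since $H^0(Y,\Omega^1_{Y/k})\neq 0$, this subspace is proper. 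Thus $i_*$ cannot be surjective, contradicting exactness.

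This proves that the sequence is not exact; as $\CH_0(X_3,D)=0$, the failure is precisely in the middle. The final assertion follows at once, since $\{P\}\inj Y$, and hence $X_1\inj X_2$, is a closed immersion of smooth schemes with smooth open complement. The main obstacle is the construction in the third step: one must produce a regulator that is well defined on $\CH_0(Y\times\A^1_k,D)$, compatible with the exact sequence of \thmref{thm:Surj-FF}, and fine enough to separate the image of $i_*$ from the whole group. This is exactly where the two vanishing theorems are needed --- to identify the codimension-two part of $K_0(Y\times\A^1_k,D)$ precisely enough, and to control its torsion, so that the comparison with absolute differentials is valid. Proving the vanishing $\CH_0(X_3,D)=0$ on the affine open surface is the second delicate input.
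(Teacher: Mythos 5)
Your global skeleton is the same as the paper's: kill the group on the open piece so that exactness would force $i_*$ to be surjective, then contradict surjectivity with an infinite-dimensional obstruction built from $\Omega^1_{k/\Q}$ and the positive genus of $Y$. But both pivotal steps are left as genuine gaps, and the routes you sketch for them would not work as stated. For the open piece $X_3=(Y\setminus\{P\})\times\A^1_k$, your plan --- affineness of $Y\setminus\{P\}$ gives vanishing of coherent cohomology, plus a ``moving argument'' --- is not an argument: $H^2(X_3,\sK_{2,(X_3,D)})$ is $K$-cohomology, not coherent cohomology, and $\CH_0$ of a smooth affine surface over a large field need not vanish (over $\C$ it can be enormous). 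The vanishing here is special to the product structure with modulus $m\{t=0\}$: it is precisely \thmref{thm:Affine-additive}, which the paper proves via \lemref{lem:Alg-lem} by producing through any given closed point a smooth principal curve avoiding $Y\times D$ and pushing forward; you should simply cite that theorem. Your appeal to \remref{remk:Surj-FF-aff} (injectivity of $cyc_{(X,D)}$ for affine $X$) is both unavailable --- the remark defers its proof to \cite{BiK} --- and unnecessary: for the contradiction one only needs the surjectivity of $cyc_{(X_2,D)}$ from \thmref{thm:Surj-FF}, so that surjectivity of $i_*$ on Chow groups would force surjectivity of the composite $\CH_0(Z,D)\to H^2(X,\sK_{2,(X,D)})$.

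The heart of your argument is also missing its mechanism. You posit a regulator on $\CH_0(Y\times\A^1_k,D)$ surjecting onto a space containing $H^0(Y,\Omega^1_{Y/k})\otimes_k\Omega^1_{k/\Q}$, and claim the image of $i_*$ lies in forms ``supported at $P$''; but no construction is given (you flag it yourself as the main obstacle), and ``supported at $P$'' is not a well-defined subspace once rational equivalence is taken into account. The paper's route is concrete: localization and Thomason--Trobaugh diagrams, together with the isomorphism $H^1(X,\sK_{2,X})\xrightarrow{\simeq}H^1(Y,\sK_{2,Y})$ proved via $SK_1$ and homotopy invariance, identify $i_*\colon\CH_0(Z,D)\to H^2(X,\sK_{2,(X,D)})$ with a map $I\to H^1\bigl(Y_D,\Omega^1_{(Y_D,Y)/\Q}/d(I_Y)\bigr)$ by \cite[Corollary~4.2]{KSri}; then \cite[Lemma~4.3]{KSri} gives an exact sequence whose cohomology shows the cokernel of $i_*$ contains $\bigl((I\otimes_k\Omega^1_{k/\Q})\otimes_k H^1(Y,\sO_Y)\bigr)/\partial\bigl(d_k(I)\otimes_k H^0(Y,\Omega^1_{Y/k})\bigr)$, which is infinite-dimensional because $\Omega^1_{k/\Q}$ is infinite-dimensional while $H^0(Y,\Omega^1_{Y/k})$ is finite-dimensional, and nonzero because positive genus gives $H^1(Y,\sO_Y)\neq 0$. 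Note the roles are the reverse of your sketch: $H^0(Y,\Omega^1_{Y/k})$ is the finite-dimensional term quotiented away, $H^1(Y,\sO_Y)$ carries the infinite-dimensional target, and the image of $i_*$ accounts exactly for the $d_k(I)$ summand. Without this identification of the relative $K$-sheaf cohomology with absolute differentials, your proposal does not separate the image of $i_*$ from the whole group.
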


The proof of this negative result is based on \thmref{thm:Surj-FF}
and the following two vanishing theorems of independent interest.

\begin{thm}\label{thm:Affine-additive}
Let $k$ be any field and let $Y$ be any non-singular affine scheme over $k$
of dimension $d \ge 1$. Then $\CH_0(Y \times \A^1_k, D) = 0$.
\end{thm}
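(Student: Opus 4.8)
The plan is to show that every closed point of $U=Y\times(\A^1_k\setminus\{0\})$ is rationally equivalent to zero in $\CH_0(X,D)$, where $X=Y\times\A^1_k$ and $D=m(Y\times\{0\})$; by linearity this suffices. The key structural input is that $X$ is affine of dimension $\ge 2$, since $Y$ is affine with $\dim Y\ge 1$: it is affineness, rather than any form of homotopy invariance, that forces the vanishing. Indeed, for $\dim Y=0$ the group is the additive Chow group of the ground field, which is nonzero, so the argument must genuinely use $\dim Y\ge 1$. Fix a closed point $P\in U$. The strategy is to produce a single admissible ``curve with modulus'' through $P$ whose associated rational equivalence has $[P]$ as its only contribution over $U$.

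Concretely, I fix a projective compactification $\overline{X}$ of $X$ with $\overline{D}$ the closure of $D$, and I look for an integral curve $C\subset X$ passing through $P$ and meeting $D$ properly, with normalization $\nu\colon\overline{C}\to\overline{X}$, together with a rational function $f\in k(\overline{C})^\times$ satisfying the modulus condition $f\equiv 1\pmod{\nu^*\overline{D}}$ and such that $\operatorname{div}(f)$ meets $\nu^{-1}(U)$ in the single point lying over $P$. All remaining zeros and poles of $f$ are then forced to lie over $\overline{X}\setminus X$, and since the cycles of $\CH_0(X,D)$ live on $U$ these boundary components are discarded; the resulting relation reads $[P]\sim 0$. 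The point of working over the affine $X$ is that the boundary $\overline{C}\setminus C$ is nonempty of positive degree, so that $\Pic(C)$ is the quotient of $\Pic(\overline{C})$ by the classes of the boundary points; this is the source of the freedom needed to realize $[P]$ as a principal divisor on the open curve $C$.

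The existence of $f$ I would reduce to a Riemann--Roch / simultaneous-approximation statement on $\overline{C}$: impose the $m$-jet condition $f\equiv 1$ at each of the finitely many points of $\nu^{-1}(|D|)$, a simple zero at the point over $P$, and poles bounded by a large multiple of the boundary divisor $\overline{C}\setminus C$. For the boundary multiplicity large enough the relevant linear system surjects onto the product of these finite-length jet quotients, so such an $f$ exists. The hard part --- and the main obstacle --- is to guarantee that $f$ has \emph{no further zeros over} $U$: a function meeting the jet conditions at $\nu^{-1}(|D|)$ and vanishing at $P$ will in general acquire extra interior zeros $w_1,\dots,w_r\in U$, yielding only $[P]+\sum_j[w_j]\sim 0$. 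Overcoming this is exactly a compatibility between the modulus (tangency) conditions along $t=0$ and the class of $[P]$ in $\Pic(C)$. I expect to handle it by choosing the auxiliary curve $C$ of sufficiently high degree and genus, with large boundary, so that $[P]$ already lies in the subgroup of $\Pic(\overline{C})$ generated by the boundary points while still leaving room for the jet conditions; equivalently, by an inductive ``sweeping'' argument that repeatedly trades the extra zeros $w_j$ for cycles supported closer to the boundary until they are absorbed. Here both hypotheses are used decisively: $\dim Y\ge 1$ provides the room inside $X$ to choose such a curve $C$, and the affineness of $Y$ makes the boundary nonempty and the Picard group flexible enough to trivialize $[P]$.

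The construction is insensitive to the residue field of $P$, so it applies to an arbitrary closed point over an arbitrary base field $k$, and summing over the points of a given $0$-cycle gives the theorem. When $\dim Y=1$, so that $X$ is an affine surface, one could alternatively invoke \thmref{thm:Surj-FF} together with the injectivity of $cyc_{(X,D)}$ for affine $X$ recorded in \remref{remk:Surj-FF-aff} to reduce the vanishing to that of the corresponding relative Milnor $K$-cohomology; but the direct construction above has the advantage of working uniformly in all dimensions.
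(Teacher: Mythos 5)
There is a genuine gap, and you have in fact located it yourself: the step where $f$ is required to have no interior zeros besides the one over $P$ is not a technical refinement of the Riemann--Roch interpolation but is the entire content of the theorem along the chosen curve. By \lemref{lem:Bloch-curve}-type identifications, demanding $\mathrm{div}_{\bar C}(f)=Q_P+(\text{boundary part})$ with $f\equiv 1$ along the pullback of $D$ says precisely that $[P]=0$ in $\CH_0(C^N,\pi_C^*D)\simeq \Pic(C^N,\pi_C^*D)$, and for a curve that genuinely meets $D$ this relative Picard group is in general nonzero (it receives both $\Pic$ of the open curve and unit-group contributions from $\sO_{\pi_C^*D}^{\times}$), so for a general choice of $C$ the class of $P$ simply does not die there. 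Jet-surjectivity of large linear systems only produces some $f$ with uncontrolled extra zeros $w_j$, and your proposed ``sweeping'' of the $w_j$ toward the boundary is neither defined nor shown to terminate. Worse, your guiding principle --- that affineness rather than the product structure forces the vanishing --- cannot be correct: nothing in your construction uses that $X=Y\times\A^1_k$ and that $D$ is a thickening of $Y\times\{0\}$, so if the argument worked as stated it would prove $\CH_0(X,D)=0$ for \emph{every} smooth affine $X$ of dimension $\ge 2$ over \emph{every} field; but $\CH_0(X,D)$ surjects onto $\CH_0(X)$ (cf.\ \remref{remk:Affine-gen-rem}), which is nonzero --- indeed enormous, by Mumford-type arguments --- for many smooth affine complex surfaces. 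Relatedly, your claim that a nonempty boundary makes ``the Picard group flexible enough to trivialize $[P]$'' fails for a fixed curve of positive genus over a large field: $[P]$ need not lie in the subgroup of $\Pic(\bar C)$ generated by finitely many boundary points. Your fallback for $d=1$ via \remref{remk:Surj-FF-aff} is also unavailable: the injectivity is deferred to a paper in preparation, and even granted it, one would still have to prove the vanishing of $H^2(X,\sK^M_{2,(X,D)})$, which is not easier than the statement itself.

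The paper's proof shares your skeleton --- kill $[P]$ on a single well-chosen curve through $P$ --- but the choice of curve is the whole point, and it uses the $\A^1$-direction decisively. After reducing to $Y$ a smooth affine curve (Bertini through $q(P)$, via \cite[Theorem~1]{AK} or \cite[Theorem~1.1]{Poonen}, plus base change to $k(P)$ to make the $\A^1$-coordinate of $P$ rational), \lemref{lem:Alg-lem} produces the curve $C=V(ga^{-1}t-1)\subset Y\times\A^1_k$, i.e.\ the graph of the function $a/g$ over $\Spec(A_g)$: it is closed in $X$ because it escapes to infinity in the $t$-direction as $g\to 0$; it is \emph{disjoint} from $D$ because $t$ is invertible on it, so the modulus condition becomes vacuous and only the push-forward \cite[Corollary~2.11]{KP-1} is needed; and $g$ is chosen so that $\fm'A_g$ is principal, i.e.\ $[P]$ is already a principal divisor on $C$. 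The localization at $g$ --- which is what trivializes the class of $P$ in $\Pic$ --- is available exactly because the graph construction keeps the localized curve closed in $X$; your fixed-curve-with-jet-conditions setup has no counterpart of this freedom. In short: the correct move is a curve avoiding $D$ altogether rather than one meeting it, and that move exists only because of the product structure your proposal declares inessential.
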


\begin{thm}\label{thm:Affine-gen}
Let $k$ be an algebraic closure of a finite field and let $X$ be a smooth 
affine scheme over $k$ of dimension $d \ge 3$. 
Then for any effective Cartier divisor $D \inj X$, we have $\CH_0(X, D) = 0$.
Assuming $D_{\rm red}$ is a normal crossing divisor, we also have
$H^d_{nis}(X, \sK^M_{d, (X,D)}) = 0$.
\end{thm}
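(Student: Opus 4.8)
The plan is to treat the two vanishing statements separately, reducing the first to the arithmetic of relative Picard groups of curves over $\ov{\F}_p$ and the second to a Gersten-type computation of the top Nisnevich cohomology.

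For $\CH_0(X,D)=0$, since $\ov{\F}_p$ is algebraically closed every closed point of $U=X\setminus D$ is rational, so $\CH_0(X,D)$ is generated by the classes $[x]$ of such points and it suffices to show $[x]=0$ for each $x\in U$. First I would invoke a Bertini/moving argument — available because $\ov{\F}_p$ is infinite, $X$ is smooth affine and $\dim X=d\ge 3$ — to find, through $x$, a smooth affine curve $C\inj X$ meeting $D$ properly, sitting inside a smooth affine surface $C\inj S\inj X$ on which the modulus restricts to $D\cap S$. The closed immersion $S\inj X$ is proper, so $[x]$ is the image of the corresponding class in $\CH_0(S,D\cap S)$, and I would push the problem onto the pair $(C,D\cap C)$, where the relevant class lives in the relative Picard group $\Pic(\ov C,\nu^*D)$ of the smooth projective model $\nu:\ov C\to X$. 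The heart of the matter is then arithmetic: over $\ov{\F}_p$ the group $\Pic^0(\ov C)$ consists entirely of torsion points, and the unit groups attached to the modulus — the multiplicative part $\ov{\F}_p^{\times}$ and the unipotent (Witt-vector) part $1+\fm$ — are respectively prime-to-$p$ divisible torsion and $p$-torsion. Each individual class is therefore torsion, being already defined over a finite subfield $\F_q\subset\ov{\F}_p$ where all these groups are finite, whereas $\CH_0(X,D)$ itself is divisible over $\ov{\F}_p$ because the algebraic closedness of the base lets one extract roots while the extra dimension supplied by $d\ge 3$ provides the room to move $C$ in a family realising those divisions as genuine relations with modulus in $X$. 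The decisive point — and the step I expect to be the main obstacle — is to reconcile these two facts, i.e.\ to show that such a torsion class is in fact trivial and not merely an element of a divisible torsion group; this is exactly where the hypotheses $k=\ov{\F}_p$ and $d\ge 3$ are indispensable, and where \thmref{thm:Affine-additive} should enter to dispose of the purely additive, wild contribution coming from the thickening of $D$ by reducing it locally to the model $Y\times\A^1_k$.

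For the second statement I would argue on the sheaf level. Assuming $D_{\red}$ is a normal crossing divisor, the relative Milnor $K$-sheaf $\sK^M_{d,(X,D)}$ admits a Gersten-type resolution on the smooth scheme $X$, so that $H^d_{nis}(X,\sK^M_{d,(X,D)})$ is computed as the cokernel of the last differential, namely a relative idele-class group assembled from the Milnor $K$-groups of the function fields of the points of $X$. Over $\ov{\F}_p$ this cokernel is forced to vanish: the residue fields are algebraically closed, the higher Milnor $K$-groups of the function field of a $d$-fold are divisible, and the pertinent symbols are boundaries, so the top cohomology is trivial. Alternatively, whenever a higher-dimensional analogue of the surjectivity in \thmref{thm:Surj-FF} is available and the cycle class map is injective for affine schemes as noted in \remref{remk:Surj-FF-aff}, the vanishing of $\CH_0(X,D)$ propagates directly to $H^d_{nis}(X,\sK^M_{d,(X,D)})$. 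The main technical requirement here is to have the Gersten resolution for the \emph{relative} Milnor $K$-sheaf at one's disposal — this is precisely what the normal crossing hypothesis on $D_{\red}$ secures — after which the computation over $\ov{\F}_p$ is formal.
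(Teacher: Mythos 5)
Your reduction for the first statement does not close, and you flag the fatal step yourself. The strategy ``each class is torsion (by descent to a finite subfield) while $\CH_0(X,D)$ is divisible, hence every class vanishes'' is not a valid inference: a divisible torsion group is a sum of Pr{\"u}fer groups and need not be zero, so one would need \emph{unique} divisibility, and it is exactly $p$-divisibility that fails on the wild part in characteristic $p$ --- the unipotent units $1+\fm$ of the thickened curve are truncated Witt vector groups over $\ov{\F}_p$, which are $p$-torsion but not $p$-divisible. Invoking \thmref{thm:Affine-additive} does not repair this, since a general pair $(X,D)$ is not locally of the product form $(Y\times\A^1_k,\, (t^m))$ to which that theorem applies. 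The paper's proof avoids the modulus entirely rather than confronting it on a curve: since $x\notin D$, one has $a-b=1$ with $a\in\fm^2$ ($\fm$ the maximal ideal of $x$) and $b\in I_D$, and Swan's Bertini theorem \cite{Swan} produces, for general $a'\in\fm^2$, a smooth integral hypersurface $Y=\Spec(A/(a-a'b))$ with $x\in Y$ and $Y\cap D=\emptyset$ (because $a-a'b\equiv 1 \bmod I_D$). By \cite[Corollary~2.11]{KP-1} the class $[x]$ then lies in the image of $\iota_*:\CH_0(Y)\to\CH_0(X,D)$, where there is \emph{no} modulus left, and $\CH_0(Y)=0$ by the known vanishing for smooth affine schemes of dimension $\ge 2$ over $\ov{\F}_p$ \cite[Theorem~6.4.1]{KS-1}. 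This is where $d\ge 3$ enters (so that $\dim Y\ge 2$), not as ``room to move $C$ in a family realising divisions''; your proposed use of $d\ge 3$ has no precise content and the reconciliation step you defer is precisely the theorem.

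For the second statement, your main route rests on a Gersten-type resolution of the relative Milnor $K$-sheaf $\sK^M_{d,(X,D)}$, which the normal crossing hypothesis does not ``secure'' --- no such resolution is known for relative Milnor $K$-sheaves, and even granting one, the divisibility you invoke is doubtful in characteristic $p$ (e.g.\ $K^M_n(F)/p$ is a group of logarithmic differential forms that need not vanish for function fields over $\ov{\F}_p$). The paper argues differently: it defines the cycle class map $\sZ_0(X,D)\to H^d_{nis}(X,\sK^M_{d,(X,D)})$ via cohomology with support, uses the normal crossing hypothesis only to factor it through relative motivic cohomology by \cite[Definition~3.4.1, Proposition~3.5]{RS}, quotes the surjectivity of $\CH_0(X,D)\to H^d_{nis}(X,\sK^M_{d,(X,D)})$ from \cite[Theorem~2.5]{KS}, and concludes from the first part. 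Your parenthetical fallback (``whenever a higher-dimensional analogue of the surjectivity in \thmref{thm:Surj-FF} is available'') is in substance the paper's actual argument, but you leave it as an unverified hypothesis instead of supplying the inputs from \cite{RS} and \cite{KS} that make it unconditional.
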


\begin{remk}\label{remk:Affine-ques}
\thmref{thm:Affine-gen} implies that the analogue of Question~\ref{ques:KES-1}
has a positive solution for affine schemes over $k$ of dimension at least 
three if $D_{\rm red}$ is a normal crossing divisor.
\end{remk}

\begin{remk}\label{remk:Affine-gen-rem}
The assertion of \thmref{thm:Affine-gen} is true also for $d =2$ and will
appear in \cite{BiK}. The proof in this note does show
at least that $\CH_0(X,D)_{\Q} = 0$ even if $X$ is a surface.  

On the other hand, it is easily seen using the
surjection $\CH_0(X, D) \surj \CH_0(X)$ that $d \ge 2$ is a necessary condition
for the vanishing of $\CH_0(X,D)$.
\end{remk}

\subsection{Additive Chow groups and reciprocity functors}
\label{sect:REC}
The reciprocity functors $T(\sM_1, \cdots , \sM_r)$ were introduced by Ivorra 
and R{\"u}lling \cite{IR}. These reciprocity functors are expected to describe
the ordinary as well as the additive higher Chow groups of 0-cycles
for smooth projective schemes over a field.   
In this direction, it was shown by Ivorra and R{\"u}lling 
(see \cite[Corollary~5.2.5]{IR}) that for a smooth projective scheme $X$ of 
dimension $d$ over a field $k$, there is an isomorphism
$T(\G^{\times r}_m, {\CH_0(X)})(k) \simeq \CH^{d+r}(X,r)$. 
They also show that $T(\G_a, \CH_0(\Spec(k)))(k) \simeq \CH_0(\A^1_k, D_2)$ if
${\rm char}(k) = 0$, where $D_2 = \Spec({k[t]}/{(t^2)})$. 
This was a verification of a special case of the general expectation that
$T(\G_a, \CH_0(X))(k)$ should be isomorphic to the additive Chow group
$\CH_0(X\times \A^1_k, D_2)$ if $X$ is a smooth projective scheme over $k$.
However, combining Theorems~\ref{thm:Local-fail} and ~\ref{thm:Affine-additive}
with \cite[Theorem~1.1]{IR-1}, we prove:

\begin{cor}\label{cor:Rec-functor}
Let $k$ be an algebraically closed field of characteristic zero with
infinite transcendence degree over $\Q$.
Let $Y$ be a connected projective curve over $k$ of positive genus. 
Then $\CH_0(Y\times \A^1_k, D_2)$ can not be described in terms of
the reciprocity functors.
\end{cor}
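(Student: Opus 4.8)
The plan is to derive Corollary~\ref{cor:Rec-functor} by contradiction, playing the failure of localization (Theorem~\ref{thm:Local-fail}) against the good localization behavior that any representation by reciprocity functors would force. Suppose, for the sake of contradiction, that for every smooth projective scheme $X$ over $k$ there is a natural isomorphism $T(\G_a, \CH_0(X))(k) \simeq \CH_0(X \times \A^1_k, D_2)$. The key structural input is \cite[Theorem~1.1]{IR-1}, which (as invoked in the paper's setup) supplies a localization-type exact sequence for the reciprocity functors $T(\G_a, -)$ associated to the datum of a closed point on a curve. Concretely, for the closed immersion $i \colon \{P\} \inj Y$ with open complement $j \colon Y \setminus \{P\} \inj Y$, one obtains an exact sequence
\[
T(\G_a, \CH_0(\{P\}))(k) \xrightarrow{i_*} T(\G_a, \CH_0(Y))(k)
\xrightarrow{j^*} T(\G_a, \CH_0(Y \setminus \{P\}))(k) \to 0.
\]

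First I would fix the curve $Y$ and point $P$ exactly as in the hypotheses of Theorem~\ref{thm:Local-fail}: $k$ algebraically closed of characteristic zero with infinite transcendence degree over $\Q$, and $Y$ a connected projective curve of positive genus. Next I would transport the exact sequence above through the assumed isomorphism $T(\G_a, \CH_0(X))(k) \simeq \CH_0(X \times \A^1_k, D_2)$ applied to each of the three schemes $X = \{P\}$, $X = Y$, and $X = Y \setminus \{P\}$. The point $\{P\} \cong \Spec(k)$ and $Y$ are smooth projective, so the isomorphism applies directly; for the open curve $Y \setminus \{P\}$ I would need the isomorphism in the form $T(\G_a, \CH_0(Y \setminus \{P\}))(k) \simeq \CH_0((Y \setminus \{P\}) \times \A^1_k, D_2)$, which is exactly the statement for the smooth (no longer projective) scheme $Y \setminus \{P\}$ and matches the additive group appearing on the right-hand term of Theorem~\ref{thm:Local-fail}. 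Assuming naturality of the isomorphism in $X$ (so that it intertwines $i_*$ and $j^*$ on both sides), the transported sequence is precisely
\[
\CH_0(\{P\} \times \A^1_k, D) \xrightarrow{i_*} \CH_0(Y \times \A^1_k, D)
\xrightarrow{j^*} \CH_0((Y \setminus \{P\}) \times \A^1_k, D) \to 0,
\]
with $D = D_2$. But Theorem~\ref{thm:Local-fail} asserts that this very sequence is \emph{not} exact for such $Y$ and $P$. This contradiction shows that no such representation can exist, proving the corollary.

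The main obstacle I anticipate is making the compatibility precise enough to conclude: I must verify that the isomorphism supplied by \cite[Theorem~1.1]{IR-1} is natural with respect to both the pushforward $i_*$ along the closed point and the pullback $j^*$ to the open complement, so that the exact sequence for reciprocity functors really does correspond term-by-term and map-by-map to the putative additive localization sequence. In particular I need the open scheme $Y \setminus \{P\}$ to be an allowed input on the reciprocity-functor side and the transition map to match the restriction $j^*$ on 0-cycles with modulus. I would also want to confirm that the additive Chow group $\CH_0(\{P\} \times \A^1_k, D_2)$ on the left genuinely maps via $i_*$ in a way matched by the reciprocity side, since degeneracy of the leftmost term could otherwise obscure the failure; here Theorem~\ref{thm:Affine-additive} is the relevant bookkeeping tool, as it pins down the vanishing of certain additive groups and thereby isolates exactly where exactness breaks. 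Once these naturality checks are in place, the contradiction is immediate and the corollary follows.
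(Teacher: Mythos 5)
There is a genuine gap, and it lies in your key citation. The exact sequence
\[
T(\G_a, \CH_0(\{P\}))(k) \xrightarrow{i_*} T(\G_a, \CH_0(Y))(k)
\xrightarrow{j^*} T(\G_a, \CH_0(Y \setminus \{P\}))(k) \to 0
\]
that your whole contradiction rests on does not exist in \cite{IR-1}: Theorem~1.1 of R\"ulling--Yamazaki is a \emph{vanishing} theorem, asserting $T(\G_a, A_1, \dots, A_n)(k) = 0$ for abelian varieties $A_i$ over a perfect field, not a localization sequence. Neither \cite{IR-1} nor the general framework of \cite{IR} provides localization sequences for the $K$-groups of reciprocity functors, and postulating one here is circular in spirit, since the corollary is precisely about the failure of localization-type behavior. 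There is a second problem: the expectation being refuted, $T(\G_a, \CH_0(X))(k) \simeq \CH_0(X \times \A^1_k, D_2)$, is stated only for $X$ smooth \emph{projective}, whereas your argument needs it (with naturality) for the open curve $Y \setminus \{P\}$, whose $\CH_0$ is not among the reciprocity functors covered by that expectation. You therefore derive a contradiction from a hypothesis strictly stronger than the one Corollary~\ref{cor:Rec-functor} denies, which does not prove the corollary.

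The intended combination stays entirely within projective inputs and uses \cite[Theorem~1.1]{IR-1} for what it actually says. Since $k$ is algebraically closed, the degree map splits $\CH_0(Y) \simeq \Z \oplus J_Y(k)$ with $J_Y$ the Jacobian, so a description by reciprocity functors would yield
\[
T(\G_a, \CH_0(Y))(k) \simeq T(\G_a, \Z)(k) \oplus T(\G_a, J_Y)(k) \simeq k,
\]
using the Ivorra--R\"ulling computation $T(\G_a, \CH_0(\Spec(k)))(k) \simeq \CH_0(\A^1_k, D_2) \simeq k$ in characteristic zero for the first summand, and the R\"ulling--Yamazaki vanishing $T(\G_a, J_Y)(k) = 0$ for the second; this is exactly where positive genus and characteristic zero enter on the reciprocity side. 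On the cycle side, \thmref{thm:Affine-additive} reduces the non-exactness of \thmref{thm:Local-fail} to the assertion that $i_*: \CH_0(\{P\} \times \A^1_k, D_2) \simeq k \to \CH_0(Y \times \A^1_k, D_2)$ is split injective but not surjective, and the proof of \thmref{thm:Local-fail} shows its cokernel maps onto an infinite-dimensional $k$-vector space coming from $H^1(Y, \sO_Y)$. Any functorial description in terms of reciprocity functors would force this cokernel to be controlled by $T(\G_a, J_Y)(k) = 0$, a contradiction. If you replace your invented localization sequence by this vanishing computation, your contradiction scheme becomes the paper's argument; as written, the proposal does not go through.
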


\vskip .3cm

\subsection{Outline of proofs}
We recall the definitions of Chow groups with modulus in \S~\ref{sect:Recoll}.
We then use the Thomason-Trobaugh spectral sequence to relate the
cohomology of the sheaf $\sK^M_{2,(X,D)}$ with the relative $K$-groups.
We first prove an analogue of \thmref{thm:Surj-FF} for curves in 
\S~\ref{sect:CCM} and deduce it for surfaces using 
\lemref{lem:Cycle-class-0}. The proof of \thmref{thm:Surj-FF} is 
completed using some results of \cite{KS} and \thmref{thm:Surj-FF-Res} 
proven by using a combination of  \thmref{thm:Surj-FF} and 
an explicit formula for the Chow group of 0-cycles on normal surfaces
from \cite{KSri}.

We prove \thmref{thm:Affine-additive} by first reducing to the case of
curves. This case is achieved with the help of an algebraic version of
a sort of containment lemma. We prove \thmref{thm:Local-fail} as a
combination of Theorems~\ref{thm:Surj-FF} and ~\ref{thm:Affine-additive}.
This reduces the problem to understanding a map of cohomology groups
of the relative $K$-theory sheaves of nilpotent ideals. This in turn
can be written as an explicit map of $k$-vector spaces, where $k$ is the 
ground field. \thmref{thm:Affine-gen} is proven by reducing
to the case of affine surfaces and empty Cartier divisor using some Bertini
theorems.

\section{Recollection of Chow group with modulus and 
relative $K$-theory}\label{sect:Recoll}
We fix a field $k$ and let $\Sch/k$ denote the category of quasi-projective
schemes over $k$. Let $\Sm/k$ denote the full subcategory of $\Sch/k$
consisting of non-singular (regular) schemes. 
Given $X \in \Sch/k$, we shall write
$X_{\rm sing}$ and $X_{\rm reg}$ for the closed and open subschemes of $X$,
where $X_{\rm red}$ is singular and regular, respectively. 
In this text, a {\sl curve} will mean
an equi-dimensional quasi-projective scheme over $k$ of dimension one.
For a curve $C$, the scheme $C^N$ will often denote the normalization of
$C_{\rm red}$.
Given a closed immersion $Y \inj X$ in $\Sch/k$, we let $|Y|$ denote the
support of $Y$ with the reduced induced closed subscheme structure.

For $X \in \Sch/k$, let $K(X)$ and $G(X)$ denote the $K$-theory spectra
of perfect complexes and coherent sheaves on $X$, respectively.
For a closed subscheme $Y \inj X$, let $K(X,Y)$ denote the homotopy fiber
of the restriction map $K(X) \to K(Y)$. 
For a sheaf $\sF$ on the small Zariski (resp. Nisnevich) site
of $X$, let $H^*_{zar}(X,\sF)$ (resp. $H^*_{nis}(X,\sF)$)
denote the cohomology groups of $\sF$.
A cohomology group in this text without mention of the underlying site 
will indicate the  Zariski cohomology.

\subsection{Thomason-Trobaugh spectral sequence for $K$-theory with 
support and relative $K$-theory}\label{sect:TTSS}
Given a scheme $X$ and a closed subscheme $Y \inj X$, let $K^Y(X)$ denote the
homotopy fiber of the restriction map of spectra $K(X) \to K(X \setminus Y)$.
Let $\sK_{i, (X,Y)}$ denote the Zariski sheaf on $X$ whose stalk at a point
$x \in X$ is the relative group $K_i(\sO_{X,x}, \sO_{Y,x})$ for $i \in \Z$.
Given a closed point $x \in X_{\rm reg} \setminus Y$, the spectrum
$K^{\{x\}}(Y)$ is contractible and hence there are natural maps of 
spectra
\begin{equation}\label{eqn:Surj-FF-3}
K(k(x)) \to K^{\{x\}}(X) \to K(X,D) \to K(X).
\end{equation}
In particular, there is a commutative diagram of
Thomason-Trobaugh spectral sequences (see \cite[Corollary~10.5]{TT})
\begin{equation}\label{eqn:Surj-FF-4}
\xymatrix@C.8pc{
E^{p,q}_{2,x} = H^p_{\{x\}}(X, \sK_{q, X}) \ar@{=>}[r] \ar[d] &
K^{\{x\}}_{q-p}(X) \ar[d] \\
E^{p,q}_{2, (X,Y)} = H^p(X, \sK_{q, (X,Y)}) \ar@{=>}[r] \ar[d] &
K_{q-p}(X,Y) \ar[d] \\
E^{p,q}_{2, X} = H^p(X, \sK_{q, X}) \ar@{=>}[r] & K_{q-p}(X)}
\end{equation} 
which is valid even when the Zariski cohomology is replaced by the Nisnevich
cohomology.

\begin{lem}\label{lem:K0-mod}
Given a modulus pair $(X,D)$ of dimension two over $k$, there is a short exact 
sequence
\begin{equation}\label{eqn:KO-mod-0}
0 \to H^2_{\sC}(X, \sK_{2, (X,D)}) \to K_0(X,D) \to \Pic(X,D) \to 0
\end{equation}
where $\sC$ is Zariski or Nisnevich cohomology.
In particular, the map $H^2_{zar}(X, \sK_{2, (X,D)}) \to 
H^2_{nis}(X, \sK_{2, (X,D)})$ is an isomorphism. 
\end{lem}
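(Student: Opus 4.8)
The plan is to extract the sequence from the middle row of the relative Thomason--Trobaugh spectral sequence $E^{p,q}_{2,(X,D)} = H^p_{\sC}(X,\sK_{q,(X,D)}) \Rightarrow K_{q-p}(X,D)$ of \eqref{eqn:Surj-FF-4}, which is available for $\sC$ equal to either Zariski or Nisnevich cohomology. Since $\dim X = 2$, both cohomological dimensions are at most $2$, so $E^{p,q}_{2,(X,D)} = 0$ unless $0 \le p \le 2$ and the spectral sequence has only three columns. The abutment $K_0(X,D)$ thus acquires a finite filtration whose graded pieces are $E^{p,p}_{\infty,(X,D)}$ for $p = 0,1,2$, and proving the lemma amounts to identifying these three pieces together with the one differential that can relate them.

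First I would compute the relative sheaves stalkwise. For a point $x \in X$, write $R = \sO_{X,x}$ (a regular local ring) and $I$ for the ideal of $D$, so that $R/I = \sO_{D,x}$ is again local. Units lift along $R^{\times} \surj (R/I)^{\times}$ and $K_0(R) \xrightarrow{\simeq} K_0(R/I)$, whence the relative sequence gives $K_0(R,I) = 0$; therefore $\sK_{0,(X,D)} = 0$ and $E^{0,0}_{2,(X,D)} = 0$. One also has $K_2(R) \surj K_2(R/I)$, since $K_2$ of a local ring is generated by Steinberg and Dennis--Stein symbols of units, all of which lift along $R \surj R/I$. Substituting both facts into the long exact sequence of homotopy sheaves of the fibration $K(X,D) \to K(X) \to i_* K(D)$ identifies $\sK_{1,(X,D)}$ with the relative unit sheaf $\sG := \ker(\sO_X^{\times} \to i_*\sO_D^{\times}) = 1 + \sI_D$. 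Hence $E^{1,1}_{2,(X,D)} = H^1_{\sC}(X,\sG) = \Pic(X,D)$, the last identity being the usual cohomological description of the relative Picard group coming from $1 \to \sG \to \sO_X^{\times} \to i_*\sO_D^{\times} \to 1$.

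Because there are only three columns and $E^{0,0}_{2} = 0$, the only differential that can affect the diagonal $q=p$ is $d_2\colon E^{0,1}_{2,(X,D)} = H^0_{\sC}(X,\sG) \to E^{2,2}_{2,(X,D)} = H^2_{\sC}(X,\sK_{2,(X,D)})$; every other differential into or out of $E^{1,1}_2$ and $E^{2,2}_2$ involves a column outside $[0,2]$ and therefore vanishes. Consequently $E^{1,1}_{\infty} = \Pic(X,D)$ survives untouched, while $E^{2,2}_{\infty} = \cok(d_2)$, and the filtration collapses to $0 \to \cok(d_2) \to K_0(X,D) \to \Pic(X,D) \to 0$. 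The crux of the proof---and the step I expect to be the main obstacle---is to show that this $d_2$ vanishes, so that $E^{2,2}_{\infty} = H^2_{\sC}(X,\sK_{2,(X,D)})$. I would approach this by comparing, via the map induced by $K(X,D) \to K(X)$, with the absolute spectral sequence of the smooth surface $X$, where the analogous differential $H^0(X,\sO_X^{\times}) \to H^2(X,\sK_{2,X}) = \CH^2(X)$ is classically zero (degeneration of the coniveau spectral sequence for $K_0$ of a smooth surface in this range); the remaining task is to promote this to the relative statement, using the module structure of the relative spectral sequence over the absolute one together with the fact that the classes in $H^0(X,\sG)$ are units congruent to $1$ along $D$, on which the secondary symbol operation computing $d_2$ is trivial.

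Finally, I would run the whole argument at once for $\sC = $ Zariski and $\sC = $ Nisnevich. The change-of-topology morphism furnishes a map between the two resulting short exact sequences which is the identity on the common abutment $K_0(X,D)$ and, on the right-hand terms, the comparison map $\Pic(X,D) \to \Pic(X,D)$ of relative Picard groups for the two topologies. The latter is an isomorphism, since a line bundle equipped with a trivialization along $D$ is a Zariski-local datum and hence $H^1_{zar}(X,\sG) = H^1_{nis}(X,\sG)$. The five lemma then forces $H^2_{zar}(X,\sK_{2,(X,D)}) \xrightarrow{\simeq} H^2_{nis}(X,\sK_{2,(X,D)})$, giving the final assertion.
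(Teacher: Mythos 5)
Your skeleton coincides with the paper's proof up to the decisive step: the same three-column Thomason--Trobaugh spectral sequence from \eqref{eqn:Surj-FF-4}, the identification $E^{1,1}_2 = H^1_{\sC}(X,\sK_{1,(X,D)}) \simeq \Pic(X,D)$ (the paper cites \cite[Lemma~2.1]{SV} together with Hilbert's theorem 90), and you correctly isolate the vanishing of $d_2\colon H^0_{\sC}(X,\sK_{1,(X,D)}) \to H^2_{\sC}(X,\sK_{2,(X,D)})$ as the crux. But your treatment of that crux is a genuine gap. Naturality along $K(X,D) \to K(X)$ only gives that the composite $H^0(X,\sK_{1,(X,D)}) \xrightarrow{d_2} H^2(X,\sK_{2,(X,D)}) \to H^2(X,\sK_{2,X})$ vanishes, i.e.\ that $\im(d_2)$ lands in the kernel of the second map. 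That kernel is in general enormous, since it absorbs the genuinely relative part of the cohomology: in the situation of \S~\ref{sect:LF} of this very paper, with $X = Y\times \A^1_k$ and $D$ a thickened fiber, one has $H^2(X,\sK_{2,X}) \simeq \CH^2(X) = 0$ while $H^2(X,\sK_{2,(X,D)})$ is an infinite-dimensional $k$-vector space. So the comparison with the absolute sequence constrains $d_2$ not at all, and your proposed repair --- a module structure over the absolute spectral sequence plus triviality of a ``secondary symbol operation'' on units congruent to $1$ along $D$ --- is unsubstantiated: a class in $H^0(X,\sG)$ is not a product of an absolute class with anything, and the asserted triviality of that operation \emph{is} the statement $d_2 = 0$ in disguise, with no argument offered.

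The paper closes this gap by reformulating and then lifting through a universal example. Running the same spectral sequence for $K_1(X,D)$, the vanishing $d^{0,1}_2 = 0$ is equivalent to surjectivity of the edge map $K_1(X,D) \to H^0(X,\sK_{1,(X,D)})$ --- the exact relative analogue of the classical fact you invoke for smooth $X$, where global units visibly lift to $K_1(X)$. Now a section $f \in H^0(X,\sK_{1,(X,D)})$ is a regular map $f\colon X \to \G_m$ with $f|_D = 1$, i.e.\ a map of pairs $(X,D) \to (\G_m,\{1\})$; since $K_1(\G_m) \to K_1(\{1\})$ is split, the class $[t]$ lifts to $K_1(\G_m,\{1\})$, and $f^*([t]) \in K_1(X,D)$ maps to $f$ in $H^0(X,\sK_{1,(X,D)})$. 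This is the one idea your proposal is missing; with it, the rest of your outline (including the five-lemma comparison of the two topologies, which matches the paper's deduction of the final isomorphism) goes through. Separately, note a slip in your stalk computations: $\sK_{0,(X,D)}$ is not the zero sheaf, because at $x \notin D$ the ring $\sO_{D,x}$ is zero and the stalk is $K_0(\sO_{X,x}) = \Z$; the sheaf is $\ker(\Z_X \to \iota_*\Z_D)$. What the exact sequence actually needs is that $H^0_{\sC}$ of this sheaf vanish, which holds precisely when every connected component of $X$ meets $D$ (implicitly assumed here --- for a component disjoint from $D$ the lemma itself would fail, as $K_0$ then has a rank summand seen by neither $H^2$ nor $\Pic$).
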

\begin{proof}
Let $\sC$ denote either the Zariski or the Nisnevich cohomology.
Since the $\sC$-cohomological dimension of $X$ is two, the
strongly convergent spectral sequence
$E^{p,q}_2 = H^p_{\sC}(X, \sK_{q, (X,D)}) \Rightarrow K_{q-p}(X,D)$ 
with differential $d_r: E^{p,q}_r \to E^{p+r, q+r-1}_r$ gives us an exact sequence
\begin{equation}\label{eqn:KO-mod-1}
H^0_{\sC}(X, \sK_{1, (X,D)}) \xrightarrow{d^{0,1}_2} 
H^2_{\sC}(X, \sK_{2, (X,D)}) \to K_0(X,D) \to H^1_{\sC}(X, \sK_{1, (X,D)}) 
\to 0.
\end{equation}

It follows from Hilbert's theorem 90 that the map 
$H^1_{zar}(X, \sK_{1, (X,D)}) \to H^1_{nis}(X, \sK_{1, (X,D)})$ is an
isomorphism and it follows from \cite[Lemma~2.1]{SV} that 
$H^1_{zar}(X, \sK_{1, (X,D)}) \xrightarrow{\simeq} \Pic(X,D)$.
We are thus left with showing that $d^{0,1}_2 = 0$. We prove this for
the Zariski cohomology as the same argument applies in the Nisnevich case.

Applying the above spectral sequence for $K_1(X,D)$, $d^{0,1}_2 =0$ is
equivalent to the assertion that the map
$K_1(X,D) \to H^0(X, \sK_{1, (X,D)})$ is surjective.
To prove this, we let $f \in H^0(X, \sK_{1, (X,D)})$. This
is equivalent to a regular map $f: X \to \G_m$ such that $f|_D =1$ 
and hence a commutative diagram with exact rows
\begin{equation}\label{eqn:KO-mod-2}
\xymatrix@C.8pc{
0 \ar[r] & K_1(\G_m, \{1\}) \ar[r] \ar[d]^{f^*} & K_1(\G_m) \ar[r] \ar[d]^{f^*}
& K_1(\{1\}) \ar[r] \ar[d]^{f^*} & 0 \\
& K_1(X,D) \ar[r] \ar[d] & K_1(X) \ar[r] \ar[d]^{\delta} & K_1(D) \ar[d] & \\
0 \ar[r] & H^0(X, \sK_{1, (X,D)}) \ar[r] & H^0(X, \sK_{1, X}) \ar[r] &
H^0(D, \sK_{1, D}). &}
\end{equation}

If we let $\G_m = \Spec(k[t^{\pm 1}])$, then one can check (as is well known)
that $\delta \circ f^*([t]) = f$. Since $t \in K_1(\G_m, \{1\})$, we
see that $f^*([t]) \in K_1(X,D)$ and $\delta \circ f^*(t)$ dies in
$H^0(D, \sK_{1, D})$. Hence, it must lie in $H^0(X, \sK_{1, (X,D)})$.
It follows that the map $K_1(X,D) \to H^0(X, \sK_{1, (X,D)})$ is surjective.
\end{proof}

\begin{remk}\label{remk:Zar-Nis}
The isomorphism $H^2_{zar}(X, \sK_{2, (X,D)}) \xrightarrow{\simeq} 
H^2_{nis}(X, \sK_{2, (X,D)})$ was shown earlier by Kato and Saito 
(see \cite[Proposition~9.9]{KS}) by a different method.
\end{remk}

\subsection{Chow groups of 0-cycles with modulus}
\label{sec:0-cyc-mod}
We recall the definition of the Chow group of 0-cycles with modulus
(see \cite[\S~2]{BS} or \cite[\S~2]{KP-1}).

Let $X$ be a non-singular scheme of pure dimension $d$ and let 
$D \subsetneq X$ be an effective 
Cartier divisor on $X$. We shall call such a pair $(X,D)$ of a
non-singular scheme and an effective Cartier divisor, a $d$-dimensional
{\sl modulus pair}.
Let $\sZ_0(X, D)$ denote the free abelian group
on the closed points in $X \setminus D$. Let $C \inj X \times \P^1_k$
be a closed irreducible curve satisfying
\begin{enumerate}
\item
$C$ is not contained in $X \times \{0, 1, \infty\}$.
\item
If $\nu: C^N \to X \times \P^1_k$ denotes the composite map from the
normalization of $C$, then one has an inequality of Weil divisors on $C^N$:
\[
\nu^*(D \times \P^1_k) \le \nu^*(X \times \{1\}).
\]
\end{enumerate}

We call such curves admissible. Let $\sZ_1(X, D)$ denote the
free abelian group on admissible curves and let $\sR_0(X, D)$ denote
the image of the boundary map 
$(\partial_{0} - \partial_{\infty}): \sZ_1(X, D) \to \sZ_0(X, D)$.
The Chow group of 0-cycles on $X$ with modulus $D$ is defined as
the quotient
\[
\CH_0(X, D) := \frac{\sZ_0(X, D)}{\sR_0(X, D)}.
\]

To relate this definition of $\CH_0(X, D)$ with the one given by
Kerz and Saito \cite{KeS}, let $\pi_C:C^N \to C$ denote 
the normalization of an integral curve $C \inj X$ which is not a component
of $D$. Let $A_{C|D}$ and $A_{C^N|D}$ denote the semi-local rings
of $C$ and $C^N$ at the supports of $C \cap D$ and $\pi^{-1}_C (C \cap D)$,
respectively. Let $\sR'_0(X,D)$ denote the subgroup of
$\sZ_0(X, D)$ given by the image
\begin{equation}\label{eqn:Rel-Chow}
{\underset{C \not\subset D}\coprod} K_1(A_{C^N|D}, I_D)  
\xrightarrow{{\rm div}} \sZ_0(X, D).
\end{equation}

Note that the surjectivity of the map $K_2(A_{C^N|D}) \to K_2(\pi^*_C(D))$
implies that 
\begin{equation}\label{eqn:Rel-Chow-0}
K_1(A_{C^N|D}, I_D) = {\rm Ker}(K_1(A_{C^N|D}) \to K_1(\pi^*_C(D)) =
{\underset{U}\varinjlim} \ {\rm Ker}(\sO(U)^{\times} \to 
\sO({\pi^*_C(D)})^{\times}),
\end{equation}
where $U$ ranges over all open subschemes of $C^N$ containing $\pi^*_C(D)$.

One can then check as in the classical case (e.g., see \cite[Theorem~3.3]{BS})
that there is a canonical isomorphism 
\begin{equation}\label{eqn:equal}
\frac{\sZ_0(X, D)}{\sR'_0(X, D)} \xrightarrow{\simeq} \CH_0(X, D).
\end{equation}

\section{The cycle class map}\label{sect:CCM}
Let $(X,D)$ be a 2-dimensional modulus pair.  
In this section, we construct the cycle class map $\CH_0(X,D) \to
H^2(X, \sK_{2, (X,D)})$ and prove Theorems~\ref{thm:Surj-FF} and
~\ref{thm:Surj-FF-Res}. More generally, we assume $X$ is either a curve
or a surface and let $P \in X \setminus D$ be a closed point. Let
$X_P$ denote the spectrum of the local ring $\sO_{X,P}$. Assume $d= 1,2$.
It follows from ~\eqref{eqn:Surj-FF-3} and ~\eqref{eqn:Surj-FF-4} that
there is a commutative diagram

\begin{equation}\label{eqn:CC-0-0}
\xymatrix@C1pc{
H^0(\{P\}, \sK_{0, \{P\}}) \ar[d]_{\simeq} \ar[r] & K_0(\{P\}) \ar[d]^{\simeq} \\
H^d_{\{P\}}(X, \sK_{d, X}) \ar[d]\ar[r] &
K^{\{P\}}_0(X) \ar[d] \\
H^d(X, \sK_{d, (X,D)}) \ar[r] & K_0(X,D),}
\end{equation}
where the top vertical arrow on the left is an isomorphism by excision and the
Gersten resolution for $\sK_{d, X_P}$ and the one on the right is an 
isomorphism by the localization sequence for $K$-theory.
We define the cycle class map 
\begin{equation}\label{eqn:CCmap-def}
cyc_{(X,D)}: \sZ_0(X,D) \to H^d(X, \sK_{d, (X,D)})
\end{equation} 
by letting $cyc_{(X,D)}([P])$ be the image of $1 \in H^0(\{P\}, \sK_{0, \{P\}})
\simeq \Z$ under the composite vertical arrow on the left in 
~\eqref{eqn:CC-0-0} and extending it linearly on all of $\sZ_0(X,D)$.
To show that this map kills rational equivalences, we first consider the case of
curves.

\begin{lem}\label{lem:Bloch-curve}
Let $(C,D) $ be an 1-dimensional modulus pair.
Then the map $cyc_{(C,D)}$ induces isomorphisms
\[
cyc_{(C,D)}: \CH_0(C,D) \xrightarrow{\simeq} H^1_{zar}(C, \sK_{1, (C,D)}) 
\xrightarrow{\simeq} H^1_{nis}(C, \sK_{1, (C,D)}) \xrightarrow{\simeq}
\Pic(C,D) \xrightarrow{\simeq} K_0(C,D).
\]
\end{lem}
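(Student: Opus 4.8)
The plan is to verify the four maps one at a time, working outward from the relative Picard group in the middle, so that the only substantial point is the cycle-class isomorphism on the far left. I would first dispose of the two middle arrows, which are formal and follow verbatim from the argument already used in \lemref{lem:K0-mod}. They all rest on the short exact sequence of Zariski sheaves
\[
1 \to \sK_{1,(C,D)} \to \sO_C^\times \to i_*\sO_D^\times \to 1,
\]
where $i : D \inj C$; indeed $\sK_{1,(C,D)}$ has stalk $\sO_{C,x}^\times$ for $x \notin D$ and $\ker(\sO_{C,x}^\times \to \sO_{D,x}^\times)$ for $x \in D$. Hilbert's Theorem~90 gives $H^1_{zar}(C,\sK_{1,(C,D)}) \xrightarrow{\simeq} H^1_{nis}(C,\sK_{1,(C,D)})$, and \cite[Lemma~2.1]{SV} identifies $H^1_{zar}(C,\sK_{1,(C,D)}) \xrightarrow{\simeq} \Pic(C,D)$. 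Concretely, taking cohomology of the displayed sequence and using $\Pic(D)=0$ yields the exact sequence
\[
\Gamma(C,\sO_C^\times) \to \Gamma(D,\sO_D^\times) \to \Pic(C,D) \to \Pic(C) \to 0,
\]
which I will reuse below.

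Next I would treat the rightmost arrow $\Pic(C,D) \xrightarrow{\simeq} K_0(C,D)$ by running the degenerate analogue of \lemref{lem:K0-mod} for a curve. Since the cohomological dimension of $C$ is one, all differentials $d_r$ ($r \ge 2$) in the spectral sequence \eqref{eqn:Surj-FF-4} vanish, and the abutment filtration on $K_0(C,D)$ collapses to the exact sequence
\[
0 \to H^1(C,\sK_{1,(C,D)}) \to K_0(C,D) \to H^0(C,\sK_{0,(C,D)}) \to 0 .
\]
It then suffices to show $H^0(C,\sK_{0,(C,D)}) = 0$. The stalk of $\sK_{0,(C,D)}$ at $x \notin D$ is $K_0(\sO_{C,x},0)=K_0(\sO_{C,x}) \cong \Z$ (as $\sO_{D,x}=0$ and $K(0)$ is contractible), while for $x \in D$ the inclusion $I_{D,x} \subseteq \fm_{C,x}$ forces $\sO_{C,x}^\times \surj \sO_{D,x}^\times$, so that $K_0(\sO_{C,x},\sO_{D,x}) = \cok(\sO_{C,x}^\times \to \sO_{D,x}^\times) = 0$. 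Hence $\sK_{0,(C,D)} = j_!\Z$ for $j : C \setminus D \inj C$, which has no nonzero global sections once $D$ meets each connected component (reducing to components, $C \setminus D$ is a dense open of an irreducible curve, hence connected, and a section must vanish near $D$). This gives $\Pic(C,D) \xrightarrow{\simeq} K_0(C,D)$.

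It remains to prove the cycle-class isomorphism $\CH_0(C,D) \xrightarrow{\simeq} H^1_{zar}(C,\sK_{1,(C,D)})$, which is the heart of the matter. The key observation is that, under the identification $H^1_{zar}(C,\sK_{1,(C,D)}) \cong \Pic(C,D)$, the local definition \eqref{eqn:CC-0-0} of $cyc_{(C,D)}([P])$ is precisely the class of the pair $(\sO_C(P),\can)$, where $\can$ is the canonical trivialization along $D$ (available because $P \notin D$). Granting this, I would show directly that $cyc_{(C,D)}$ kills $\sR'_0(C,D)$: by \eqref{eqn:Rel-Chow} a generator is ${\rm div}(f)$ for a rational function $f$ that is a unit near $D$ with $f \equiv 1 \pmod{I_D}$, and then $(\sO_C({\rm div}\,f),\can) = (\sO_C, f)$ is trivial in $\Pic(C,D)$ because $f|_D = 1$; so by \eqref{eqn:equal} the map descends to $\CH_0(C,D)$. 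Surjectivity and injectivity then follow by matching $\sZ_0(C,D)/\sR'_0(C,D)$ term by term with the relative-Picard exact sequence of the first paragraph: every class in $\Pic(C)$ is represented by a divisor moved off $D$ (smoothness of $C$), and every unit in $\Gamma(D,\sO_D^\times)$ is realized as $f|_D$ for a rational function $f$ on $C$ regular and invertible near $D$ (the Chinese remainder theorem on the semilocal ring of $C$ at $D$), the corresponding ${\rm div}(f)$ being exactly the cycle hitting the boundary class.

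The main obstacle I anticipate is this last step: pinning down that the local cycle class of \eqref{eqn:CC-0-0} agrees with the relative line bundle $(\sO_C(P),\can)$, and then showing that $\sR'_0(C,D)$ maps \emph{onto} the kernel of $\Pic(C,D) \to \Pic(C)$ and not merely into it. Both require careful bookkeeping of the trivializations along $D$ together with the approximation statement on the curve; by contrast, the two middle isomorphisms are citations and the rightmost one is a one-line degeneration of the spectral sequence.
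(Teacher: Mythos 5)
Your proposal is correct, but it takes a genuinely different route from the paper on the two non-formal arrows (the two middle isomorphisms are the same citations in both). For $\CH_0(C,D)\xrightarrow{\simeq} H^1_{zar}(C,\sK_{1,(C,D)})$, the paper never passes through $\Pic(C,D)$: it takes the filtered colimit of localization sequences over closed subsets $S\subset C\setminus D$ to get the short exact sequence of sheaves $0\to \sK_{1,(C,D)}\to j_*(\sK_{1,(C_D,D)})\to \coprod_{P\notin D}(i_P)_*K_0(k(P))\to 0$, shows the middle sheaf is acyclic by semi-locality of $C_D$, and thus computes $H^1$ directly as the cokernel of $K_1(A_{C|D},I_D)\xrightarrow{{\rm div}}\sZ_0(C,D)$, which is $\CH_0(C,D)$ by definition ~\eqref{eqn:equal}; well-definedness, injectivity and surjectivity come in one stroke, with no need for your two flagged verifications (that $cyc_{(C,D)}([P])$ is the class of $(\sO_C(P),\can)$, and that $\sR'_0(C,D)$ fills the whole kernel of $\Pic(C,D)\to\Pic(C)$). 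Those verifications are nonetheless standard, and your divisor-theoretic matching against $\Gamma(C,\sO_C^\times)\to\Gamma(D,\sO_D^\times)\to\Pic(C,D)\to\Pic(C)\to 0$ does go through: units of $\sO_D$ lift to units of the semilocal ring $A_{C|D}$ since $I_D$ lies in its Jacobson radical, divisor classes move off $|D|$ because $A_{C|D}$ is a semilocal PID, and a cycle killed by $cyc$ is ${\rm div}(f)$ with $f\equiv 1$ on $D$ after adjusting by a global unit, which is exactly membership in $\sR'_0(C,D)$ via ~\eqref{eqn:Rel-Chow-0}. For $\Pic(C,D)\to K_0(C,D)$, the paper again uses the colimit of fiber sequences, obtaining $\coprod_{P\notin D}K(k(P))\to K(C,D)\to K(A_{C|D},I)$ and hence $\CH_0(C,D)\xrightarrow{\simeq}K_0(C,D)$ directly, whereas you degenerate the Thomason--Trobaugh spectral sequence and compute $\sK_{0,(C,D)}\simeq j_!\Z$; your stalk computations there are correct. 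One merit of your version: it makes explicit a hypothesis the paper leaves tacit, namely that $|D|$ must meet every connected component of $C$ (equivalently, that closed subsets of $C$ disjoint from $D$ are finite, which the paper's colimit argument silently assumes). Without it the rightmost isomorphism genuinely fails --- for $D=\emptyset$ on a component one has $K_0=\Z\oplus\Pic$ rather than $\Pic$ --- so your condition is exactly right and not an artifact of your method.
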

\begin{proof}
For any reduced closed subset $S \subsetneq C$ such that $S \cap D = \emptyset$
and any open subset $U \subseteq X$,
we have the localization fiber sequence of spectra
\[
K(S \cap U) \to K(U) \to K(U \setminus S).
\]
Taking the filtered colimit over closed subsets $S$ as above under the
inclusion, we get a short exact sequence of Zariski sheaves
\begin{equation}\label{eqn:curve-BF}
0 \to \sK_{1, (C,D)} \to j_*(\sK_{1, (C_D, D)}) \to
{\underset{P \notin D} \coprod} \ (i_P)_*(K_0(k(P))) \to 0
\end{equation}
on $C$, where $C_D$ is the spectrum of the semi-local ring $A_{C|D}$ of $C$ at 
$|D|$ and $j: C_D \inj C$ is the inclusion map.
This yields the cycle class map
\begin{equation}\label{eqn:curve-BF-0}
cyc_{(C,D)}: {\underset{P \notin D} \coprod} \ \Z \to H^1(C, \sK_{1, (C,D)}).
\end{equation}

To show that this induces an isomorphism $\CH_0(C,D) \to H^1(C, \sK_{1, (C,D)})$,
we first claim that $j_*(\sK_{1, (C_D, D)})$ is an acyclic Zariski sheaf. 
To prove this claim, it suffices to show that if $U \inj C$ is open and $U_D$
is the spectrum of the semi-local ring of $U$ at $|U \cap D|$, then
$H^i(U_D, \sK_{1, (U_D,D)}) = 0$ for $i \ge 1$.
But this is immediate from the exact sequence 
\[
0 \to  \sK_{1, (U_D,D)} \to  \sK_{1, U_D} \to  \sK_{1, U \cap D} \to 0
\]
and the fact that $U_D$ is a semi-local scheme.

It follows from the above claim that ~\eqref{eqn:curve-BF} is an 
acyclic resolution of $\sK_{1, (C,D)}$ and in particular, there
is an exact sequence
\[
K_1(A_{C|D}, I_D) \xrightarrow{\rm div} {\underset{P \notin D} \coprod} 
\ \Z \to H^1_{zar}(C, \sK_{1, (C,D)}) \to 0.
\]
This implies by ~\eqref{eqn:equal} that ~\eqref{eqn:curve-BF-0}
induces an isomorphism $\CH_0(C,D) \xrightarrow{\simeq} 
H^1_{zar}(C, \sK_{1, (C,D)})$.

The isomorphism of the natural map $H^1_{zar}(C, \sK_{1, (C,D)}) 
\to H^1_{nis}(C, \sK_{1, (C,D)})$ follows easily from Hilbert's theorem 90.

We now consider the commutative diagram of homotopy fiber sequences
\[
\xymatrix@C1pc{
{\underset{P \notin D}\coprod} K(k(P)) \ar[r] & K(C) \ar[r] \ar[d] & 
K(A_{C|D}) \ar[d] \\
& K({A_{C|D}}/I) \ar@{=}[r] & K({A_{C|D}}/I).}  
\]
This yields a homotopy fiber sequence
\[
{\underset{P \notin D}\coprod} K(k(P)) \to K(C,D) \to K(A_{C|D},I)
\]
and in particular, an exact sequence
\[
K_1(A_{C|D},I) \xrightarrow{\partial} \sZ_0(C, D) \to K_0(C,D) \to 0
\]
and we conclude from this that 
\[
{\rm Coker}(\partial) = \CH_0(C, D) \xrightarrow{\simeq} K_0(C, D).
\]
Finally, the isomorphism $H^1_{zar}(C, \sK_{1, (C,D)}) 
\xrightarrow{\simeq} \Pic(C,D)$ follows from \cite[Lemma~2.1]{SV}.
\end{proof}

\begin{lem}\label{lem:Cycle-class-0}
Let $(X,D)$ be a 2-dimensional modulus pair and let $f:C \to X$ be a finite
map, where $C$ is a non-singular curve such that $f^*(D)$ is a proper
closed subscheme of $C$. Then there is a commutative diagram
\begin{equation}\label{eqn:Cycle-class-1}
\xymatrix@C2pc{
\sZ_0(C, f^*(D)) \ar[r]^{cyc_{(C, f^*(D))}} \ar[d]_{f_*} & 
H^1(C, \sK_{1, (C, f^*(D))}) \ar[d]^{f_*} \\
\sZ_0(X, D) \ar[r]^<<<<<<{cyc_{(X, D)}} & 
H^2(X, \sK_{2, (X,D)})}
\end{equation}
where $f_*$ on the left is the push-forward map.
\end{lem}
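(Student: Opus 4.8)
The plan is to reduce to a single $0$-cycle and then identify both routes around the square with the transfer on the $K$-theory of the residue fields. Since $\sZ_0(C, f^*(D))$ is free on the closed points of $C \setminus f^*(D)$ and all four maps are additive, it suffices to check commutativity on a single generator $[Q]$ with $Q \in C \setminus f^*(D)$. Put $P = f(Q)$; as $Q \notin f^*(D)$ we have $P \in X \setminus D$, so $f_*[Q] = [k(Q):k(P)]\,[P]$ is a genuine element of $\sZ_0(X, D)$. On the cohomology side $f_*$ is the map induced, via the Thomason--Trobaugh spectral sequence \eqref{eqn:Surj-FF-4}, by the proper pushforward $f_* \colon G(C) \to G(X)$ of the finite morphism $f$; since $C$ and $X$ are regular I freely identify $G$-theory with $K$-theory.

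Next I would build the pushforward of the entire diagram \eqref{eqn:CC-0-0}. The class $cyc_{(C, f^*(D))}([Q])$ is, by definition, the image of $1 \in H^0(\{Q\}, \sK_{0, \{Q\}}) \cong K_0(k(Q))$ under the left vertical composite of the curve analogue of \eqref{eqn:CC-0-0}, and similarly $cyc_{(X,D)}([P])$ is the image of $1 \in K_0(k(P))$ in the surface diagram. The finite morphism $f$ induces compatible pushforward maps at every stage: the transfer $f_* \colon K_0(k(Q)) \to K_0(k(P))$, the pushforward on $K$-theory with supports $K^{\{Q\}}(C) \to K^{\{P\}}(X)$, the relative pushforward $K(C, f^*(D)) \to K(X,D)$, and the maps they induce on the spectral sequences of \eqref{eqn:Surj-FF-4} and hence on their $E_2$-terms. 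The relative pushforward exists because $f$ restricts to a finite map $f^*(D) \to D$ and, $f^*(D)$ being the Cartier-divisor pullback, the resulting square is Tor-independent; hence $f_*$ carries the homotopy fiber $K(C, f^*(D))$ into $K(X,D)$. Assembling these produces a morphism from the curve version of \eqref{eqn:CC-0-0} to the surface version whose connecting arrows are the various $f_*$.

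It then remains to check that each face of this prism commutes. The squares formed from the excision/Gersten isomorphism and from the localization isomorphism commute by the standard functoriality of the coniveau and localization sequences in $G$-theory under proper pushforward, and the support hypothesis $Q \notin f^*(D)$ ensures that every class occurring is supported away from the divisors, so that passing to the relative theories commutes with $f_*$. The only explicit input is the transfer on residue fields, $f_*(1) = [k(Q):k(P)] \in K_0(k(P))$, whose multiplicity is exactly the one in $f_*[Q]$. Chasing $1 \in K_0(k(Q))$ down the curve diagram and then across by $f_*$ therefore agrees with chasing $[k(Q):k(P)] \in K_0(k(P))$ down the surface diagram, giving
\[
f_*\big(cyc_{(C, f^*(D))}([Q])\big) = [k(Q):k(P)]\, cyc_{(X,D)}([P]) = cyc_{(X,D)}\big(f_*[Q]\big),
\]
which is the asserted identity.

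The main obstacle is not any single commutativity---each is an instance of standard proper-pushforward functoriality---but the bookkeeping needed to realize the full Thomason--Trobaugh apparatus functorially under $f_*$, and in particular to confirm that the Gysin map $f_* \colon H^1(C, \sK_{1, (C, f^*(D))}) \to H^2(X, \sK_{2, (X,D)})$ appearing in the statement coincides with the one coming from the $G$-theory pushforward and respects the modulus structure.
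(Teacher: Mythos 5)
Your reduction to a single generator $[Q]$, and your construction of the relative pushforward $K(C,f^*(D))\to K(X,D)$ via Tor-independence of the Cartier pullback, both match the paper's starting point (~\eqref{eqn:Cycle-class-2}). But there is a genuine gap at the heart of the argument: you take the right-hand vertical map $f_*\colon H^1(C,\sK_{1,(C,f^*(D))})\to H^2(X,\sK_{2,(X,D)})$ to be ``the map induced, via the Thomason--Trobaugh spectral sequence, by the proper pushforward $f_*\colon G(C)\to G(X)$,'' and then appeal to standard functoriality. No such induced map exists in the degrees you need. A map of presheaves of spectra induces a map of descent spectral sequences in the \emph{same} bidegree: the pushforward $U\mapsto \bigl(K(f^{-1}(U))\to K(U)\bigr)$ gives on $E_2$-terms maps $H^p(X,f_*\sK_{q,C})\to H^p(X,\sK_{q,X})$, whereas the map in the lemma shifts $(p,q)$ by $(1,1)$ --- it is a Gysin map, shifting by the codimension of $C$ in $X$. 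Worse, the same-bidegree sheaf map $f_*\sK_{q,C}\to\sK_{q,X}$ that your functoriality does produce is useless here: at a closed point $P=f(Q)$ the stalk-level pushforward sends the point class to $[k(Q)]\in K_0(\sO_{X,P})$, which is zero because $\sO_{X,P}$ is a two-dimensional regular local ring, so nothing about the cycle class survives on $E_2$. Your closing paragraph concedes that one must ``confirm that the Gysin map coincides with the one coming from the $G$-theory pushforward,'' but in the statement of the lemma there is no pre-existing Gysin map on these relative $K$-cohomology groups: constructing it is the main content of the lemma, and your proposal never does so.

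The paper's proof supplies exactly this missing construction. It takes homotopy cofibers of $f_*\colon K(C)\to K(X)$ and $f_*\colon K(E)\to K(D)$ as presheaves of spectra on $X_{zar}$, sheafifies the resulting long exact sequences (~\eqref{eqn:Cycle-class-3}), uses the Gersten resolution to show $\sK_{i,X}\to\wt{\sK}_{i,X\setminus C}$ is injective, and thereby extracts the short exact sequence of sheaves $0\to\wt{\sK}_{2,(X,D)}\to{\rm Ker}(\phi)\to f_*(\sK_{1,(C,E)})\to 0$ of ~\eqref{eqn:BF-surf-4-shv}. The pushforward in the lemma is then \emph{defined} as the connecting homomorphism $H^1(C,\sK_{1,(C,E)})\to H^2(X,\wt{\sK}_{2,(X,D)})\simeq H^2(X,\sK_{2,(X,D)})$, and the commutativity of ~\eqref{eqn:Cycle-class-1} is verified by running the same connecting map with supports in $\Sigma_P=f^{-1}(P)$ and $\{P\}$ (~\eqref{eqn:Cycle-class-5}), where the only computation left is that ${\underset{Q\in\Sigma_P}\coprod}\,\Z\to\Z$ is the degree map --- the one genuinely local input, corresponding to your $f_*(1)=[k(Q):k(P)]$. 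Your prism of pushforwards, even granting all its faces, would establish compatibility only at the level of the abutments $K_0(C,f^*(D))\to K_0(X,D)$, not the asserted square of sheaf-cohomology groups; to repair the argument you would need to build the degree-shifting map by a cofiber or coniveau argument of this kind and then redo the support comparison.
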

\begin{proof}
We set $E = f^*(D)$. Since $\iota_X: D \inj X$ and $\iota_C: E \inj C$ are 
Cartier divisors, it follows that ${\rm Tor}^i_{\sO_X}(\sO_D, f_*(\sO_C)) = 0$ 
for $i > 0$. In particular, there is a commutative diagram
\begin{equation}\label{eqn:Cycle-class-2}
\xymatrix@C1pc{
K(C) \ar[r]^{f_*} \ar[d]_{\iota^*_C} & K(X) \ar[d]^{\iota^*_X} \\
K(E) \ar[r]_{f_*} & K(D).}
\end{equation}

As this diagram makes sense for any open $U \inj X$
and is functorial for restriction to open subsets, we see that
~\eqref{eqn:Cycle-class-2} is in fact a diagram of presheaves of spectra on 
$X_{zar}$. 

If we consider the homotopy cofibers of the horizontal arrows
in ~\eqref{eqn:Cycle-class-2}, we obtain a commutative diagram of homotopy 
cofiber sequences of presheaves of spectra on $X_{zar}$. 
Taking the long homotopy groups exact sequences, we obtain
the associated diagram of the long exact sequences of
the presheaves of homotopy groups. The exactness of the sheafification
functor yields a commutative diagram of the long exact sequences
of the sheaves of homotopy groups corresponding to 
~\eqref{eqn:Cycle-class-2}.

Let $\wt{K}(X\setminus C)$ and $\wt{K}(D \setminus E)$ denote the homotopy
cofibers of the top and bottom horizontal arrows in ~\eqref{eqn:Cycle-class-2},
respectively. Let $\wt{\sK}_{i, X\setminus C}$ denote the Zariski sheaf on
$X$ associated to the presheaf of homotopy groups $U \mapsto 
\pi_i(\wt{K}(U\setminus C))$. Defining $\wt{\sK}_{i, D\setminus E}$ in a
similar way, we get a commutative diagram of the long exact sequences
 
\begin{equation}\label{eqn:Cycle-class-3}
\xymatrix@C.8pc{
\cdots \ar[r] & \wt{\sK}_{3,{X\setminus C}} \ar[r] \ar[d] & 
f_*(\sK_{2, C})  \ar[r] \ar[d] & 
\sK_{2, X} \ar[r] \ar[d] &  \wt{\sK}_{2, {X\setminus C}} \ar[r] 
\ar[d] & f_*(\sK_{1, C}) \ar[r] \ar[d] & \cdots \\
\cdots \ar[r] & \wt{\sK}_{3, D \setminus E} \ar[r] & 
f_*(\sK_{2, E}) \ar[r] & \sK_{2,D} \ar[r] & 
\wt{\sK}_{2, D \setminus E} \ar[r] &
f_*(\sK_{1, E}) \ar[r] & \cdots .}
\end{equation}

If $\ov{C}$ is the image of $f: C \to X$, then we have a factorization 
$K(C) \to G(\ov{C}) \to K(X)$ (see \cite[Proposition~5.12 (i)]{Srinivas})
and this shows that there is a factorization
$\sK_{i, X} \to \wt{\sK}_{i, {X\setminus C}} \to j_*(\sK_{i, {X\setminus \ov{C}}}) \to
j_*(K_i(k(X)))$, where $j: X \setminus \ov{C} \inj X$ is the inclusion. 
The Gersten resolution says that the composite map is
injective. It follows that the map $\sK_{i, X} \to \wt{\sK}_{i, {X\setminus C}}$
is injective. Since the map $f_*(\sK_{i, C}) \to f_*(\sK_{i, E})$ is surjective
for $i \le 2$,
the above diagram refines to a commutative diagram of short exact sequences
\begin{equation}\label{eqn:Cycle-class-4}
\xymatrix@C.8pc{ 
0 \ar[r] & \sK_{2, X} \ar[r] \ar[d] & \wt{\sK}_{2,{X\setminus C}} \ar[r] 
\ar[d]^{\phi} & f_*(\sK_{1, C}) \ar[r] \ar[d] & 0 \\
0 \ar[r] & \sK_{2,D} \ar[r] &  \wt{\sK}_{2, D \setminus E} \ar[r] &
f_*(\sK_{1, E}) \ar[r] & 0.}
\end{equation}

Set $\wt{\sK}_{2, (X, D)} = {\rm Ker}(\sK_{2, X} \to \sK_{2,D})$.
Since the vertical arrows on the left and the right ends in
~\eqref{eqn:Cycle-class-4} are surjective, the
middle arrow is also surjective and there is a short exact sequence of the 
kernel sheaves
\begin{equation}\label{eqn:BF-surf-4-shv} 
0 \to \wt{\sK}_{2, (X, D)} \to {\rm Ker}(\phi) \to 
f_*(\sK_{1, (C, E)}) \to 0.
\end{equation}

Considering the long exact cohomology sequences with and without support
and observing that $H^i(C, f_*(\sK_{1, (C, E)})) \simeq H^i(C, \sK_{1, (C,E)})$
(the higher direct images of $\sK_{1, (C, E)}$ vanish as one can easily check),
we get a commutative diagram
\begin{equation}\label{eqn:Cycle-class-5}
\xymatrix@C.8pc{ 
{{\underset{Q \in \Sigma_P}\coprod} \ \Z} \ar[r]^<<<{\simeq} \ar[d]_{f_*} &
H^1_{\Sigma_P}(C, \sK_{1, C}) \ar[r]^<<<{\simeq} \ar[d] & 
H^1_{\Sigma_P}(C, \sK_{1, (C, E)}) \ar[r] \ar[d] & 
H^1(C, \sK_{1, (C,E)}) \ar[d] \\
\Z \ar[r]^<<<<{\simeq} & 
H^2_{\{P\}}(X, \sK_{2, X}) \ar[r]^<<{\simeq} & 
H^2_{\{P\}}(X, \wt{\sK}_{2, (X, D)}) \ar[r] & 
H^2(X, \wt{\sK}_{2, (X,D)})}
\end{equation} 
for any closed point $P \in X \setminus D$ and $\Sigma_P = f^{-1}(P)$.
It is well known that the left-most vertical map is the push-forward map.
Since the map $\sK_{2, (X, D)} \to \wt{\sK}_{2, (X,D)}$ is surjective whose
kernel is supported on $D$, the map 
$H^2(X, \sK_{2, (X,D)}) \to H^2(X, \wt{\sK}_{2, (X,D)})$ 
is an isomorphism. This immediately yields ~\eqref{eqn:Cycle-class-1}.
\end{proof}

\subsection{Proof of \thmref{thm:Surj-FF}}
In view of \lemref{lem:K0-mod}, the proof of \thmref{thm:Surj-FF} is
reduced to showing that the cycle class map $cyc_{(X,D)}: \sZ_0(X,D) \to 
H^2(X, \sK_{2, (X,D)})$ constructed in ~\eqref{eqn:CCmap-def}
kills the group of rational equivalences $\sR'_0(X,D)$ 
(see ~\eqref{eqn:equal}) and is surjective.
So, let us take an integral curve $C \inj X$ which is not contained in $D$
and let $f: C^N \to X$ denote the induced map from the normalization of $C$.
Letting $E = f^*(D)$ and 
$g \in {\rm Ker}(\sO^{\times}_{C^N} \surj \sO^{\times}_E)$,
we need to show that
$cyc_{(X,D)} \circ f_*({\rm div}(g)) = 0$.  For this, we consider the
diagram
\begin{equation}\label{eqn:Cycle-class-6}
\xymatrix@C2pc{ 
\sR'_0(C^N,E) \ar[r] \ar[d]_{f_*} & \sZ_0(C^N,E) \ar[r]^<<<<{cyc_{(C^N,E)}} 
\ar[d]^{f_*} &  H^1(C^N, \sK_{1, (C^N,E)}) \ar[d]^{f_*} \\
\sR'_0(X,D) \ar[r] & \sZ_0(X,D) \ar[r]^<<<<<{cyc_{(X,D)}} &  
H^2(X, \sK_{2, (X,D)})}
\end{equation}
in which the left square commutes by \cite[Proposition~2.10]{KP-1}
and the right square commutes by \lemref{lem:Cycle-class-0}.
Since the composite horizontal map on the top is zero by 
\lemref{lem:Bloch-curve}, it follows that 
$cyc_{(X,D)} \circ f_*({\rm div}(g)) = f_* \circ cyc_{(C^N,E)}({\rm div}(g)) = 0$.
The surjectivity of $cyc_{(X,D)}$ now follows from 
\lemref{lem:Cycle-class-0}, the isomorphism $\sK^M_{2,(X,D)} \xrightarrow{\simeq}
\wt{\sK}_{2,(X,D)}$, the diagram~\eqref{eqn:CC-0-0} and \cite[Theorem~2.5]{KS}.
$\hfill\square$

\subsection{Proof of \thmref{thm:Surj-FF-Res}}
Let $\pi:X\to Y$ be a resolution of singularities of a normal surface over
any field $k$. We set $U = Y_{\rm reg}$ and $C(U) = {\underset{D}\varprojlim} \ 
\CH_0(X, D)$, where the limit is taken over all effective Cartier
divisors on $X$ with support outside $U$. Let $E \inj X$ denote the
reduced exceptional divisor.
If $D \subsetneq X$ is an effective Cartier divisor with
support $|D| \subseteq E$, then $mE - D$ must be an effective Cartier
divisor some $m \gg 1$. This implies that the canonical maps
$C(U) \to {\underset{m}\varprojlim} \ \CH_0(X, mE)$ and
${\underset{D}\varprojlim} \ H^2(X, \sK_{2,(X,D)}) \to 
{\underset{m}\varprojlim} \ H^2(X, \sK_{2,(X,mE)})$
are isomorphisms. 

Let $\CH_0(Y)$ denote the Chow group of 0-cycles on $Y$ in the sense of
\cite{LW} and let $S \inj Y$ denote the singular locus of $Y$ with
reduced subscheme structure. We then have a commutative diagram
\begin{equation}\label{eqn:0-C-map-I-0}
\xymatrix@C2pc{
\CH_0(Y) \ar[r]^<<<<<<{cyc_{(Y,mS)}} \ar[d]_{\pi^*} & H^2(Y, \sK_{2,(Y,mS)}) 
\ar[d]^{\pi^*} \ar[dr]^{\simeq} & \\
\CH_0(X,mE) \ar[r]^{cyc_{(X, mE)}} \ar[d] & 
H^2(X, \sK_{2,(X,mE)})
\ar[d] & H^2(Y, \sK_{2,Y}) \ar[dl]^{\pi^*} \\
\CH_0(X) \ar[r]^{cyc_X} & H^2(X, \sK_{2,X}). &}
\end{equation}

The map $cyc_{(Y,mS)}$ is defined exactly like $cyc_{(X,mE)}$ and is
an isomorphism by \cite[Proposition~3.1]{Kr-1}. The natural map
$H^2(Y, \sK_{2,(Y,mS)}) \to H^2(Y, \sK_{2,Y})$ is an isomorphism also by
 \cite[Proposition~3.1]{Kr-1}.
The map $\pi^*: \CH_0(Y) \to \CH_0(X,mE)$  is induced by the
identity map $\pi^*: \sZ_0(U) \to \sZ_0(X, mE)$. 

To show that it preserves rational
equivalences, let $C \inj Y$ be an integral
curve not meeting $S$ and let $h \in k(C)^{\times}$.
Let $\Gamma_h \inj C \times \P^1 \inj Y \times \P^1$ be the graph of the
function $h: C \to \P^1$. It is then clear that $\Gamma_h \cap (S
\times \P^1) = \emptyset$. In particular, $\pi^{-1}(\Gamma_h) \cap
(E \times \P^1) = \emptyset$. This shows that $[\Gamma_h] \in
\sZ_1(X, mE)$ is an admissible 1-cycle such that
\[
\pi^*({\rm div}(h)) = \pi^*([h^*(0)] - [h^*(\infty)]) =
\pi^*(\partial_{0}([\Gamma_h]) - \partial_{\infty}([\Gamma_h]))
= (\partial_{0} - \partial_{\infty})([\Gamma_h]).
\]
This shows that $\pi^*({\rm div}(h)) \subset \sR_0(X, mE)$
and yields the pull-back $\pi^*: \CH_0(Y) \surj \CH_0(X, mE)$.
All other maps in ~\eqref{eqn:0-C-map-I-0} are naturally defined and all 
are surjective.

If we let $F^2K_0(X, mE)$ denote the image of the map
$cyc_{(X, mE)}: \CH_0(X,mE) \to K_0(X,mE)$, then it follows from
\thmref{thm:Surj-FF} and \lemref{lem:K0-mod} that 
$F^2K_0(X, mE) \to H^2(X, \sK_{2,(X,mE)})$ is an isomorphism.
We now apply \cite[Theorem~1.1]{KSri} to conclude that the map
$H^2(Y, \sK_{2,(Y,mS)}) \to  H^2(X, \sK_{2,(X,mE)})$ is an isomorphism
for all sufficiently large $m$. It follows that all arrows in
the upper square of  ~\eqref{eqn:0-C-map-I-0} are isomorphisms for
all sufficiently large $m$. 
In particular, the map $cyc_{(X,mE)}: \CH_0(X,mE) \to H^2(X, \sK_{2,(X,mE)})$ is 
an isomorphism for all sufficiently large $m$ and hence the map
$C(U) \to {\underset{m}\varprojlim} \ H^2(X, \sK_{2,(X,mE)})$
is an isomorphism.
$\hfill\square$

\section{Vanishing theorems and failure of localization}
\label{sect:LF}
Let $k$ be a field and consider the effective Cartier divisor 
$D = \Spec({k[t]}/{t^m})$ on $\A^1_k = \Spec(k[t])$.
Given $X \in \Sch/k$, let us denote the effective Cartier divisor
$X \times D \inj X \times \A^1_k$ by $D$ itself.
We shall prove \thmref{thm:Affine-additive} using the following algebraic 
result.

\begin{lem}\label{lem:Alg-lem}
Let $A$ be the coordinate ring of a smooth affine curve over $k$
and let $\fm$ be a maximal ideal of $A[t]$ which contains the
ideal $(t-a)$, where $a \in k^{\times}$. Then we can find a prime
ideal $\fp$ of height one in $A[t]$ such that the following hold.
\begin{enumerate}
\item
$\fp \subsetneq \fm$.
\item
${A[t]}/{\fp}$ is smooth.
\item
${\fm}/{\fp}$ is a principal ideal.
\item
$\fp + (t) = A[t]$.
\end{enumerate}
\end{lem}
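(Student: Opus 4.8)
The geometric content of the statement is the following. Writing $X = \Spec A[t] = C \times \A^1_k$, where $C = \Spec A$ is our smooth affine curve, the maximal ideal $\fm$ corresponds to a closed point $P = (Q,a)$ lying on the divisor $\{t = a\}$, where $Q \in C$ is cut out by $\fq := \fm \cap A$ and $a \in k^\times$; indeed $\fm = \fq A[t] + (t-a)$. A height-one prime $\fp \subsetneq \fm$ is the ideal of an integral curve $Z \inj X$ through $P$; condition (2) asks that $Z$ be smooth, condition (4) that $Z \cap \{t = 0\} = \emptyset$, and condition (3) that the maximal ideal of $P$ in the Dedekind domain $A[t]/\fp$ be principal, i.e. that the class of $P$ in $\Pic(Z)$ be trivial. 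The plan is to realize $Z$ as the graph of a carefully chosen rational function on $C$.

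First I would look for $Z$ as the graph of a rational function $\phi \in k(C)^\times$ having no zeros on the affine curve $C$. Such a graph is automatically closed in $C \times \A^1_k$ (the poles of $\phi$ escape to $t = \infty$), it is isomorphic via the first projection to the open subcurve $U := C \setminus \{\text{poles of } \phi\} = \Spec A_h$, and on it $t = \phi$ is a nowhere-vanishing unit. This at once yields a height-one prime $\fp = \ker(A[t] \surj A_h,\ t \mapsto \phi)$ with $A[t]/\fp \cong A_h$ smooth, establishing (2), and with $Z \cap \{t=0\} = \emptyset$, establishing (4); imposing $\phi(Q) = a$ places $P$ on $Z$ and gives (1).

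Condition (3) then translates, under $Z \cong \Spec A_h$, into triviality of the class of $Q$ in $\mathrm{Cl}(A_h) = \mathrm{Cl}(A)/\langle [R] : R \text{ a pole of } \phi\rangle$. So the heart of the construction is to prescribe the poles of $\phi$. Since the classes of closed points generate $\mathrm{Cl}(A)$, I can choose finitely many closed points $R_1, \dots, R_n \in C \setminus \{Q\}$ with $[Q] \in \langle [R_1], \dots, [R_n]\rangle$. To produce $\phi$ I would take an effective divisor $E$ on $C$ with $\mathrm{supp}(E) \supseteq \{R_1, \dots, R_n\}$, with $Q \notin \mathrm{supp}(E)$, and with $[E] = 0$ in $\mathrm{Cl}(A)$; the last condition is arranged by enlarging $E$ by an auxiliary effective divisor in the class $-\sum [R_j]$, which exists because every ideal class of a Dedekind domain contains an ideal coprime to any prescribed finite set of primes. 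As $[E] = 0$, the ideal of $E$ is principal, say $(\psi)$ with $\psi \in A$ vanishing exactly along $E$; then $\phi := \psi^{-1}$ has no zeros on $C$ and poles exactly $\mathrm{supp}(E)$, so $U = C \setminus \mathrm{supp}(E)$ and the class $[Q] \in \langle [R_j]\rangle$ dies in $\mathrm{Cl}(A_h)$, giving (3).

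The one point that needs care, and which I expect to be the main technical obstacle, is pinning the value $\phi(Q) = a$ while keeping the pole set equal to $\mathrm{supp}(E)$: since $a \in k^\times$ lies in the value group $\{u(Q) : u \in A_h^\times\}$ (which contains the constants), I would multiply $\psi$ by a suitable unit of $A_h$, chosen by strong approximation on $A$ to be a unit at each $R_j$ and to take the prescribed value at $Q$, so as to reach $\phi(Q) = a$ without cancelling any pole. Note that when $A$ is a principal ideal domain (e.g. $\Pic(C) = 0$) the argument collapses: one simply takes $\fp = (t-a)$. The entire difficulty is concentrated in the case of nontrivial class group, where the graph must genuinely cover $C$ and the class of $P$ can be annihilated only by deleting the points $R_j$.
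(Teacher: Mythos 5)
Your proof is correct, and it reaches the paper's conclusion by a genuinely different mechanism at the key step. Both arguments ultimately produce the same shape of prime: yours is $\fp=\ker\bigl(A[t]\surj A_{\psi'},\ t\mapsto 1/\psi'\bigr)=(\psi' t-1)$ with $\psi'\in A$ and $\psi'(Q)=a^{-1}$, while the paper's is $\fp=(ga^{-1}t-1)$ with $g(Q)=1$; in both cases $\fp$ is the graph of a nowhere-vanishing regular function on a localization $A_h$, which makes (1), (2) and (4) automatic. The difference lies in how one arranges (3), the principality of $\fq A_h$ for $\fq=\fm\cap A$. You do it globally: pick points $R_1,\dots,R_n\neq Q$ whose classes generate a subgroup of $\mathrm{Cl}(A)$ containing $[Q]$, complete $\sum_j R_j$ to a principal effective divisor avoiding $Q$, and invert its equation; this imports the coprime-representative lemma for ideal classes and an approximation argument. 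The paper does it locally and more cheaply: $\fq A_{\fq}$ is principal because $A_{\fq}$ is a DVR, so $\fq A_f$ is already principal for some $f\notin\fq$ by spreading out, and the explicit identity $\alpha f-\alpha_1 f_1-\alpha_2 f_2=1$ normalizes $g=\alpha f$ to satisfy $g\equiv 1 \bmod \fq$; no class-group input is needed, and the divisor of such a $g$ automatically contains enough points to kill $[Q]$ in $\mathrm{Cl}(A_g)$ --- spreading out does for free what your construction performs by hand. (The paper also splits off the case where $\fq$ is principal, taking $\fp=(t-a)$; this matches your PID remark, except that principality of $\fq$ itself, not triviality of all of $\mathrm{Cl}(A)$, is the exact dividing line.) One loose point in your write-up, easily repaired: the adjusting element $u$ produced by approximation lies in $A$ but need not be a unit of $A_h$, since it may vanish at points outside $\{R_1,\dots,R_n\}\cup\{Q\}$. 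This is harmless: extra zeros of $u$ only enlarge the pole set of $\phi=1/(\psi u)$, keeping the $R_j$ among the inverted points and $Q$ outside them, so all four conditions persist; in fact only the single condition $u(Q)=a^{-1}\psi(Q)^{-1}$ is needed, and surjectivity of $A\surj A/\fq$ already supplies such a $u$ without any approximation.
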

\begin{proof}
Consider the maximal ideal $\fm' = \fm \cap A$ of $A$. Since $A$ is a 
Dedekind domain, we can write $\fm' = (f_1,f_2)$. But this implies
using our hypothesis that $\fm = (t-a, f_1, f_2) = (a^{-1}t-1, f_1, f_2)$.
In case $f_1 = f_2$, we take $\fp = (t-a)$ which clearly does the job.
So we assume that $f_1 \neq f_2$. 

Since $A_{\fm'}$ is a discrete valuation ring, $\fm'A_{\fm'}$ is a principal
ideal. In particular, there is an element $f \in A$ such that $f \notin \fm'$
and $\fm' A_f$ is principal. As $f \notin \fm'$, we have $(f) + \fm' = A$,
and this gives us an identity $\alpha f - \alpha_1 f_1 - \alpha_2 f_2 -1 = 0$
in $A$. Setting $g = \alpha f$, we see that $\fm'A_g$ is also a 
principal ideal. Furthermore, we have
\begin{equation}\label{eqn:Alg-lem-0}
ga^{-1}t- 1 = g(a^{-1}t-1) + g -1   = 
g(a^{-1}t-1) + \alpha_1 f_1 + \alpha_2 f_2 \in \fm.
\end{equation}

If we set $\fp = (ga^{-1}t-1) \subsetneq A[t]$, we have just shown that
$\fp \subsetneq \fm$. Since ${A[t]}/{\fp} \simeq A_{g}$ 
and hence
\[
\frac{\fm}{\fp} \simeq \frac{\fm A_g[t]}{\fp A_g[t]} \simeq 
\frac{(-g^{-1}(\alpha_1 f_1 + \alpha_2 f_2), f_1, f_2) A_g[t]}{\fp A_g[t]}
\simeq \frac{(f_1, f_2) A_g[t]}{\fp A_g[t]} \simeq \fm'A_g,
\]
we see that (2) and (3) are satisfied. The item (4) is clear.
This proves the lemma.
\end{proof}

\subsection{Proof of \thmref{thm:Affine-additive}}
We can assume that $Y$ is connected. We set $X = Y \times \A^1_k$ and
$U = Y \times \G_m$. Let $p: X \to \A^1_k$ and $q: X \to Y$ denote the
projection maps. Let $P \in U$ be a closed point and set
$P_1 = p(P)$ and $P_2 = q(P)$. Then $P_1 \in \G_m$ and $P_2 \in Y$ are
closed points as well. 

We can find a non-singular curve $\iota: C \inj Y$ containing $P_2$
(see \cite[Theorem~1]{AK} when $k$ is infinite and \cite[Theorem~1.1]{Poonen}
when $k$ is finite). 
It follows from \cite[Proposition~2.10]{KP-1} that there is a 
push-forward map $\iota_*: \CH_0(C \times \A^1_k, D) \to
\CH_0(Y \times \A^1_k, D)$ such that the class $[P] \in 
\CH_0(Y \times \A^1_k, D)$ lies in the image of this map. 
We can therefore assume that $Y$ is a curve.

Now $P$ defines a unique closed point
$P' \in X_{k(P)}$ such that $P = \pi(P')$, where
$\pi: \Spec(k(P)) \to \Spec(k)$ is the finite map. This gives $[P] =
\pi_*([P'])$  under the push-forward map \
$\pi_*: \CH_0(X_{k(P)}, D) \to
\CH_0(X,D)$ (see \cite[Proposition~2.10]{KP-1}). It suffices
therefore to show that the class $[P'] \in \CH_0(X_{k(P)},D)$ 
dies. We can thus assume that $P_1 \in \G_m(k)$.

We can now apply \lemref{lem:Alg-lem} to get a smooth affine curve
$i: C \inj X $ which is a closed subset of $X$ containing $P$ such that 
$C \cap (Y \times D) = \emptyset$ and $P \in C$ is a principal Cartier divisor. 
In particular, the class $[P] \in \CH_0(C)$ is zero.
On the other hand, the condition $C \cap (Y \times D) = \emptyset$ implies
that the inclusion $\sZ_0(C) \inj \sZ_0(X, D)$ defines a 
push-forward map $i_*: \CH_0(C) \to \CH_0(X,D)$ 
(see \cite[Corollary~ 2.11]{KP-1}) such that $i_*([P]) = [P] \in \CH_0(X,D)$.
It follows that $[P] = 0$. This proves that $\CH_0(X, D) = 0$.
The second part of the theorem now follows from \thmref{thm:Surj-FF}.
$\hfill\square$

\vskip .3cm

As an immediate consequence of Theorems~\ref{thm:Surj-FF} and 
~\ref{thm:Affine-additive}, we get

\begin{cor}\label{cor:Aff-add-1}
Given a non-singular affine curve $Y$ over a field $k$, we have
\[
K_0(Y \times \A^1_k, D) \xrightarrow{\simeq} \Pic(Y \times \A^1_k, D).
\]
\end{cor}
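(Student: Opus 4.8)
The plan is to read the result directly off the exact sequence of \thmref{thm:Surj-FF} once the relevant Chow group has been shown to vanish. First I would set $X := Y \times \A^1_k$ and observe that, since $Y$ is a non-singular affine curve and $\A^1_k$ is a non-singular affine line, $X$ is a non-singular affine (hence quasi-projective) surface over $k$, while $D = Y \times \Spec(k[t]/(t^m))$ is an effective Cartier divisor on $X$. Thus $(X,D)$ is a $2$-dimensional modulus pair and \thmref{thm:Surj-FF} applies, giving the exact sequence
\[
\CH_0(X,D) \xrightarrow{cyc_{(X,D)}} K_0(X,D) \to \Pic(X,D) \to 0.
\]

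Next I would invoke \thmref{thm:Affine-additive} for the non-singular affine curve $Y$, which has dimension $d = 1 \ge 1$, to conclude that $\CH_0(X,D) = \CH_0(Y \times \A^1_k, D) = 0$. Feeding this into the sequence above, the image of $cyc_{(X,D)}$ is zero, so the map $K_0(X,D) \to \Pic(X,D)$ has trivial kernel and is therefore injective; since exactness at $\Pic(X,D)$ already makes it surjective, it is an isomorphism, which is precisely the claimed statement.

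The corollary is thus a formal consequence of the two cited theorems, so I do not anticipate any genuine obstacle. The only points that require verification are that the hypotheses of both inputs are met under the present assumptions, namely that $X = Y \times \A^1_k$ is a non-singular quasi-projective surface carrying the effective Cartier divisor $D$ (for \thmref{thm:Surj-FF}) and that $Y$ is a non-singular affine scheme of dimension at least one (for \thmref{thm:Affine-additive}); both hold by construction.
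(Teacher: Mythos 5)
Your proof is correct and is exactly the paper's argument: the paper likewise derives the corollary as an immediate consequence of the exact sequence of Theorem~\ref{thm:Surj-FF} applied to the modulus pair $(Y \times \A^1_k, D)$ together with the vanishing $\CH_0(Y \times \A^1_k, D) = 0$ from Theorem~\ref{thm:Affine-additive}. Your added verification that the hypotheses of both theorems hold for $X = Y \times \A^1_k$ is a sensible (if routine) supplement to what the paper leaves implicit.
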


\begin{remk}\label{remk:Aff-proj-add}
Theorem~\ref{thm:Affine-additive} is known to fail when $d =0$
(see \cite{BE-2}).
\end{remk}

\subsection{Proof of \thmref{thm:Affine-gen}}
Let the pair $(X, D)$ be as in \thmref{thm:Affine-gen} and let 
$x \in X \setminus D$ be a closed point. We can assume that $X$ is connected.
We claim that there is a smooth affine closed subscheme
$\iota: Y \inj X$ of dimension $d-1$ such that $Y \cap D = \emptyset$
and $x \in Y$.  

To prove the claim, let $A$ denote the coordinate ring of $X$ and let
$I \inj A$ denote the defining ideal of $D$. Let $\fm \inj A$ denote the
maximal ideal corresponding to $x \in X$. Our assumption implies that
there exist elements $a \in \fm^2$ and $b \in I$ such that $a-b =1$.
We can now apply \cite[Theorems~1.3, 1.4]{Swan} to conclude that
for general $a' \in \fm^2$, the ring ${A}/{(a -a'b)}$ is integral and 
smooth. Setting $f = a- a'b$, we see that $f \in \fm$ and
$f-1 = a-a'b-1 = b-a'b = b(1-a') \in I$. This shows that $Y := \Spec({A}/{(f)})$
satisfies our requirement.

Using the above claim and \cite[Corollary~2.11]{KP-1}, we get a push-forward
map $\iota_*: \CH_0(Y) \to \CH_0(X,D)$ whose image contains the cycle class
$[x]$. The desired vanishing now follows because one knows that
$\CH_0(Y) = 0$ (see e.g., \cite[Theorem~6.4.1]{KS-1}).

To prove the second assertion of the theorem, we first notice that
for a closed point $x \in X \setminus D$, we have natural maps 
\[
K_0(k(x)) \xrightarrow{\simeq} H^d_{\{x\}}(X, \sK^M_{d,(X,D)}) \to 
H^d_{zar}(X,  \sK^M_{d,(X,D)}) \to H^d_{nis}(X,  \sK^M_{d,(X,D)}). 
\]
Setting $cyc_{(X,D)}([x])$ to be the image of $1 \in K_0(k(x))$ under the 
composite map, we get a cycle class map 
$cyc_{(X,D)}: \sZ_0(X,D) \to H^d_{nis}(X,  \sK^M_{d,(X,D)})$.

If $D_{\rm red}$ has normal crossings, then it follows from
\cite[Definition~3.4.1, Proposition~3.5]{RS} that $cyc_{(X,D)}$ has 
a factorization
$\CH_0(X,D) \to \H^{2d}_{nis}(X, \sZ(d)_{X|D}) \to H^d_{nis}(X,  \sK^M_{d,(X,D)})$,
where $\sZ(d)_{X|D}$ is the sheaf of cycle complexes
$U \mapsto \sZ^d(U|D, 2d-\bullet)$ on $X_{nis}$.
Moreover, it follows from \cite[Theorem~2.5]{KS} that the map
$cyc_{(X,D)}: \CH_0(X,D) \to H^d_{nis}(X,  \sK^M_{d,(X,D)})$ is surjective.
The vanishing of $H^d_{nis}(X,  \sK^M_{d,(X,D)})$ now follows from the
first part of the theorem.
$\hfill\square$

\vskip .4cm

\subsection{Proof of \thmref{thm:Local-fail}}\label{sect:fail-M}
In view of \thmref{thm:Affine-additive}, the theorem is equivalent to
the assertion that the push-forward map 
$\CH_0(\{P\} \times \A^1_k, D) \xrightarrow{i_*} \CH_0(Y \times \A^1_k, D)$
is not surjective.
Note that the composite map $\CH_0(\{P\} \times \A^1_k, D) \xrightarrow{i_*} 
\CH_0(Y \times \A^1_k, D) \xrightarrow{\pi_*} \CH_0(\A^1_k,D)$
is an isomorphism, where $\pi:Y \to \Spec(k)$ is the structure map.
In particular, $i_*$ is split injective.
Our aim is to show that this is not surjective.

We set $X = Y \times \A^1_k, V = Y \setminus \{P\}$, $U = V \times \A^1_k$
and $Z = \{P\} \times \A^1_k$. For any $W \in \Sch/k$, we shall
write $W \times D$ as $W_D$ in this proof. In view of \thmref{thm:Surj-FF},
it suffices to show that the composite map
$\CH_0(Z, D) \xrightarrow{i_*} \CH_0(X, D)
\xrightarrow{cyc_{(X,D)}} H^2(X, \sK_{2, (X,D)})$ is not surjective.

Let $\sH^P_{Y_D}$ denote the exact category of coherent sheaves on $Y_D$ which
have cohomological dimension at most one and which are supported on 
$\{P\} \times D$ so that there is a commutative diagram of the
fiber sequences of spectra (see \cite[Theorem~9.1]{Srinivas})
\begin{equation}\label{eqn:Fail-0}
\xymatrix@C.8pc{
K(Z) \ar[r] \ar[d] & K(X) \ar[r] \ar[d] & K(U) \ar[d] \\
K(\sH^P_{Y_D}) \ar[r] &  K(Y_D) \ar[r] &  K(V_D).}
\end{equation}

As in the proof of \lemref{lem:Cycle-class-0}, this diagram canonically
extends to a commutative diagram of presheaves of spectra. 
Let $\sK^P_{i, Y_D}$ denote
the Zariski sheaf on $Z$ associated to the presheaf of homotopy groups
$W \mapsto \pi_i(K(\sH^P_{Y_D \cap W}))$. 
Sheafifying the associated presheaves of
homotopy groups and arguing as in the proof of \lemref{lem:Cycle-class-0},
we obtain the commutative diagrams of short exact sequence of Zariski sheaves

\begin{equation}\label{eqn:Quillen-*-3}
\xymatrix@C1pc{
& 0 \ar[d] & 0 \ar[d] & 0 \ar[d]   & \\
0 \ar[r] &  \wt{\sK}_{2,(X,D)} \ar[r]  \ar[d] & j_*(\wt{\sK}_{2,(U,D)}) 
\ar[r]  \ar[d] & i_*({\sK}^P_{1, Z}) \ar[r] \ar[d] & 0 \\
0 \ar[r] & \sK_{2,X} \ar[r]  \ar[d] & j_*(\wt{\sK}_{2,U}) \ar[r]  \ar[d] &
\sK_{1, Z} \ar[r] \ar[d] & 0 \\
0 \ar[r] & \sK_{2, Y_D} \ar[r] \ar[d] & j_*(\sK_{1,V_D}) \ar[r] \ar[d] &
\sK^P_{1, Y_D} \ar[r] \ar[d] & 0 \\
& 0 & 0 & 0 &}
\end{equation}   
and

\begin{equation}\label{eqn:Quillen-*-4}
\xymatrix@C1pc{
0 \ar[r] & \sK_{1, (Z,D)} \ar[r] \ar[d] & \sK_{1,Z} \ar@{=}[d] \ar[r] & 
\sK_{1, {\{P\}}_D} \ar[r] \ar[d] & 0 \\
0 \ar[r] &  {\sK}^P_{1, Z} \ar[r] & \sK_{1,Z} \ar[r] & \sK^P_{1, Y_D} \ar[r] & 0}
\end{equation} 

These diagrams together give rise to a commutative diagram of exact sequences

\begin{equation}\label{eqn:Quillen-*-5}
\xymatrix@C1pc{
0 \ar[r] & H^0(Z, \sK_{1,Z}) \ar[r]^<<{\iota^*_{(Z,D)}} \ar[d] & 
H^0({\{P\}}_D, \sK_{1,{\{P\}}_D}) \ar[r]^>>>{\partial_Z} \ar[d] &
H^1(Z, \sK_{1, (Z,D)}) \ar[r] \ar[d]^{i_*} & 0 \\
0 \ar[r] & H^1(X, \sK_{2,X}) \ar[r]^{\iota^*_{(X,D)}} & H^1(Y_D, \sK_{2,Y_D}) 
\ar[r]_{\partial_X}  & H^2(X, \sK_{2, (X,D)}) \ar[r]  & 0.}
\end{equation}

The maps $\partial_Z$ and $\partial_X$ are surjective because
$H^1(Z, \sK_{1, Z}) \simeq \CH_0(Z) = 0 =
\CH_2(X) \simeq H^2(X, \sK_{2, X})$.
The homotopy invariance of
$K$-theory tells us that the composite map
$H^0(Z, \sK_{1,Z}) \xrightarrow{\iota^*_{(Z,D)}}
H^0({\{P\}}_D, \sK_{1,{\{P\}}_D)}) \to H^0(\{P\}, \sK_{1,\{P\}})$
is an isomorphism.
We claim that the composite map
$H^1(X, \sK_{2,X}) \xrightarrow{\iota^*_{(X,D)}} 
H^1(Y_D, \sK_{2,Y_D}) \to H^1(Y, \sK_{2,Y})$ is also an
isomorphism.

We have a commutative diagram
\begin{equation}\label{eqn:Quillen-*-6}
\xymatrix@C1pc{
K_1(Y) \ar[r] \ar[d] &  H^0(Y, \sK_{1,Y}) \ar[d] \\
K_1(X) \ar[r] & H^0(X, \sK_{1,X}),}
\end{equation}
where the vertical arrows are isomorphisms and the horizontal arrows are
split surjections.
This implies that the induced pull-back map $SK_1(Y) \to SK_1(X)$
is an isomorphism.
We now have a commutative diagram
\begin{equation}\label{eqn:Quillen-*-7}
\xymatrix@C1pc{
SK_1(Y) \ar[r] \ar[d] &  H^1(Y, \sK_{2,Y}) \ar[d] \\
SK_1(X) \ar[r] & H^1(X, \sK_{2,X}),}
\end{equation}
where the top horizontal arrow is an isomorphism and the bottom horizontal
arrow is surjective (see \cite[Lemma~2.3]{KSri}). We have shown above that
the left vertical arrow is an isomorphism. This implies that the 
right vertical arrow is surjective.
On the other hand, it is split injective via the 0-section embedding.
Hence, it is an isomorphism. This proves the claim.

The claim shows that the first horizontal arrows from left in both rows of
~\eqref{eqn:Quillen-*-5} are split injective.
Combining this with Lemmas~\ref{lem:Bloch-curve} and ~\ref{lem:Cycle-class-0}, 
we can identify $i_*: \CH_0(Z,D) \to H^2(X, \sK_{2, (X, D)})$ as the map
\begin{equation}\label{eqn:Quillen-*-8}
i_*: K_1(\{P\} \times D, \{P\} \times \{0\}) \to
H^1(Y_D, \sK_{2, (Y_D, Y)}).
\end{equation}
Using \cite[Corollary~4.2]{KSri}, this map is same as the map of
$\Q$-vector spaces
\begin{equation}\label{eqn:Quillen-*-9} 
i_*: I \to H^1(Y_D, \frac{\Omega^1_{{(Y_D,Y)}/{\Q}}}{d(I_Y)}),
\end{equation}
where $I$ is the ideal sheaf of $\Spec(k)$ inside $D$,
$I_Y = I \otimes_k \sO_Y$ and
$\Omega^1_{{(Y_D,Y)}/{\Q}} = {\rm Ker}(\Omega^1_{{Y_D}/{\Q}} \surj 
\Omega^1_{Y/{\Q}})$.
We are thus reduced to showing that this map of $\Q$-vector spaces is
not surjective. Notice that the assumption $m \ge 2$ implies that $I \neq 0$.

By \cite[Lemma~4.3]{KSri}, there is a short exact sequence
\[
0 \to \Omega^1_{k/{\Q}} \otimes_k I_Y \to
 \frac{\Omega^1_{{(Y_D,Y)}/{\Q}}}{d(I_Y)} \to 
\frac{\Omega^1_{{(Y_D,Y)}/{k}}}{d_k(I_Y)} \to 0.
\]

It is easy to check by local calculations that 
$\frac{\Omega^1_{{(Y_D,Y)}/{k}}}{d(I_Y)} \simeq \Omega^1_{Y/{\Q}} \otimes_k
d_k(I)$, where $d_k: I \to \Omega^1_{D/k}$ is the $k$-derivation.
In particular, the above exact sequence can be written as
\begin{equation}\label{eqn:Quillen-*-10} 
0 \to (I \otimes_k \Omega^1_{k/{\Q}}) \otimes_k\sO_Y \to 
\sK_{2, (Y_D, Y)} \to d_k(I) \otimes_k \Omega^1_{Y/k} \to 0.
\end{equation}

Taking the associated long exact cohomology sequence, we get a
commutative diagram
\begin{equation}\label{eqn:Quillen-*-11} 
\xymatrix@C1pc{
& & I \ \ \  \ar[d]_{i_*} \ar[dr]^{d_k} &  \\
d_k(I) \otimes_k H^0(Y, \Omega^1_{Y/k}) \ar[r]^<<\partial & 
(I \otimes_k \Omega^1_{k/{\Q}}) \otimes_k H^1(Y, \sO_Y) \ar[r] &
H^1(Y_D, \sK_{2, (Y_D, Y)}) \ar[r] & d_k(I) \ar[r] & 0}
\end{equation}
with the bottom sequence exact.

It is straightforward to check that $d_k$ is an isomorphism.
On the other hand, as $k$ has infinite transcendence degree over $\Q$ and
$Y$ has positive genus, we see that $\partial$ is a map of $k$-vector
spaces whose source is finite dimensional but the target is
infinite dimensional. This shows that there is a
split exact sequence
\begin{equation}\label{eqn:Quillen-*-12}
0 \to \frac{(I \otimes_k \Omega^1_{k/{\Q}}) \otimes_k H^1(Y, \sO_Y)}
{d_k(I) \otimes_k H^0(Y, \Omega^1_{Y/k})} \to 
H^1(Y_D, \sK_{2, (Y_D, Y)}) \to  d_k(I) \to 0
\end{equation}
such that the first term is an infinite dimensional $k$-vector space
and the composite map $I \xrightarrow{i_*} H^1(Y_D, \sK_{2, (Y_D, Y)}) \to 
d_k(I)$ is an isomorphism. In particular, the cokernel of $i_*$ is
an infinite dimensional $k$-vector space. This finishes the proof of 
\thmref{thm:Local-fail}.
$\hfill\square$

\vskip .5cm

\noindent\emph{Acknowledgements.}
The author would like to thank Jinhyun Park for his questions
related to connections between additive Chow groups and reciprocity
functors that led to \S~\ref{sect:REC}. The author would like to
thank Marc Levine and Federico Binda for invitation to the university
of Duisburg-Essen at Essen in April 2015, where this paper was revised.
The author would also like to thank the referee for carefully reading the
paper and suggesting valuable improvements.

\end{document}